\newcommand{\cdummy}{\cdot}
\newcommand{\nin}{\not\in}
\newcommand{\nocomma}{}
\newcommand{\noplus}{}
\newcommand{\tmem}[1]{{\em #1\/}}
\newcommand{\tmop}[1]{\ensuremath{\operatorname{#1}}}
\newcommand{\tmstrong}[1]{\textbf{#1}}
\newcommand{\tmtextit}[1]{{\itshape{#1}}}
\newcommand{\tmverbatim}[1]{{\ttfamily{#1}}}
\newenvironment{itemizedot}{\begin{itemize}
    }{\end{itemize}}
\newtheorem{theorem}{Theorem}[section]
\newtheorem{corollary}[theorem]{Corollary}
\newtheorem{definition}[theorem]{Definition}
\newtheorem{lemma}[theorem]{Lemma}
\newtheorem{proposition}[theorem]{Proposition}
\theoremstyle{definition}
\newtheorem{example}[theorem]{Example}
\theoremstyle{remark}
\newtheorem{remark}[theorem]{Remark}
\begin{document}

\title{Cayley-type graphs for group-subgroup pairs}

\author{Cid Reyes-Bustos}

\date{\today}

\subjclass[2010]{Primary 05C25; Secondary 05C50, 20C99}

\begin{abstract}
  In this paper we introduce a Cayley-type graph for group-subgroup
  pairs and present some elementary properties of such graphs, including
  connectedness, their degree and partition structure, and
  vertex-transitivity. We relate these properties to those of the underlying
  group-subgroup pair. From the properties of the group, subgroup and
  generating set some of the eigenvalues can be determined, including the
  largest eigenvalue of the graph. In particular, when this construction
  results in a bipartite regular graph we show a sufficient condition on the
  size of the generating sets that results on Ramanujan graphs for a fixed
  group-subgroup pair. Examples of Ramanujan pair-graphs that do not
  satisfy this condition are also provided, to show that the
  condition is not necessary.
\end{abstract}

\keywords{Cayley graph, Group and Subgroup, Ramanujan graphs, Regular graph
  spectrum, Adjacency matrix}

{\maketitle}

\section{Introduction }

A Ramanujan graph is a $k$-regular finite graph with nontrivial eigenvalues
$\mu$ satisfying
\begin{equation}\label{eqn:ramprop}
  |\mu| \leqslant 2 \sqrt{k-1} ,
\end{equation}
this condition is equivalent to the ``Graph Theoretical Riemann Hypothesis''
for the Ihara zeta function associated to the graph, see {\cite{Terras2011}} for more
information. The above equivalence was first formulated by Sunada in {\cite{Sunada1988}}.
Infinite families of $k$-regular Ramanujan graphs of increasing size are examples of the
more general families of expander graphs. In fact, the families of Ramanujan graphs are the
expander families that are optimal from the spectral point of view, according to the bounds
by Alon and Boppana. Furthermore, the known constructions of families of Ramanujan graphs are
essentially number theoretical; for instance, the proof of the Ramanujan property (\ref{eqn:ramprop})
for the original families is related to the Ramanujan conjecture for modular forms of weight 2.
The families of expanders have applications to engineering and computer science, among others areas
\cite{Hoory2006}.

The original construction of families of Ramanujan graphs was presented by
Lubotzky, Phillips and Sarnak in
{\cite{Lubotzky1988}}{\tmem{{\tmem{{\tmstrong{}}}}}}, and independently by
Margulis in {\cite{Margulis1988}}. The former construction consists of Cayley
graphs on projective linear groups over a finite field. The use of Cayley
graphs allows to utilize the properties of the underlying group and the
generating set to determine structural and spectral properties of the graph.
For example, the irreducible representations of the underlying group are
directly related to the eigenvalues of the graph.

Recently, there has been work on generalizing the notion of group determinant
as a function of a group-subgroup pair using the representation theory of
$\alpha$-determinants {\cite{Kimoto2008}}. The resulting wreath determinant
for group-subgroup pairs shares some of the properties of the group
determinant, but there is a nontrivial relation with the chosen ordering of
the group and subgroup elements. Some results on factorization for this
determinant can be found on {\cite{Hamamoto2014}}, it turns out to be closely
related to the representation of the symmetric groups associated with the
given group and subgroup and their wreath product.

It was suggested to the author by Masato Wakayama that one could employ the
same strategy to extend existing constructions, or to conceive new
constructions, of Ramanujan graphs and expanders based on groups, in
particular, by generalizing Cayley graphs. In this paper, we apply these ideas
to introduce an extension of Cayley graphs for a group-subgroup pair. The
motivation is twofold, one is to have further tools to solve problems, in
particular the construction of Ramanujan graphs; and to continue the work on
this incipient group-subgroup study philosophy.

The main purpose of this paper is to introduce the concept of group-subgroup
pair graph based on these considerations and show its elementary
properties. In particular, the degree of the vertices of the pair
graph is determined by the coset structure of the subgroup and the
chosen generating set.
The organization of the paper is as follows. In Section 2, the definition,
examples and basic properties are presented. Namely, the determination of the
degrees of the vertices of the graph, and the conditions for the graph to be
regular. In Section 3, conditions for connectedness of the graph are
completely determined, along with the number and properties of connected
components for disconnected graphs. Additionally, sufficient conditions for
the graphs to be bipartite and the notion of trivial eigenvalues are
introduced for the group-subgroup pair graphs. In Section 4 we limit the
discussion to the regular graph case and assert a symmetry relation between
the spectrum for different choices of generating set for fixed group and
subgroup, this result is applied to give a sufficient condition on the size of
the generating set that guarantees that the resulting graphs are Ramanujan
graphs. We provide two examples of Ramanujan graphs obtained by using this
condition. The common theme throughout this paper is to compare and relate the
results on group-subgroup pair graphs with results on Cayley graphs.

\section{Definition and basic properties }

In this section we introduce the group-subgroup pair graph, a
Cayley-type graph construction for group-subgroup pairs; relate the
definition with the group matrix and establish some elementary properties of the graphs.

First, we introduce the conventions and notation used in this paper. All
groups are assumed to be finite unless otherwise stated and $e$ always
represents the identity of a given group G. For a subgroup $H$ of $G$, all
cosets are assumed to be right cosets and we say that $a$ is incongruent to
$b$ modulo $H$ when $H a  \neq H b$. A subset $X \subset G$ is said to be
symmetric if $X^{-1} =X$. For a given group $G$ and symmetric subset $S$ we
denote the corresponding Cayley graph by $\mathcal{G} ( G,S )$. The
characteristic function of a subset $X  \subset G$ is denoted $\delta_{X}$ and
the notation $[ k ]$ for $k \in \mathbbm{N}$ is used for the set $\{ 1,2,
\ldots ,k \}$.

\newpage

\subsection{Definition and examples}

\begin{definition}
  \label{def}Let $G$ be a group, $H$ a subgroup and $S  \subset  G$ a subset
  such that $S \cap H $ is a symmetric subset of $G$. Then the \textbf{Group-Subgroup Pair Graph} $\mathcal{G} ( G,H,S )$ is the graphwith vertex set G and edges
  \[
  \begin{dcases}
    ( h,h s ), ( h s,h  ) & \forall  h  \in H,\hspace{2mm} \forall s
    \in S - H , \\
    ( h, h s )  & \forall h \in H  \hspace{2mm}  \forall s \in S \cap
    H
  \end{dcases}
  \]

\end{definition}

Equivalently, the group-subgroup pair graph $\mathcal{G} ( G,H,S )$ can be
defined as
\[ \mathcal{G} ( G,H,S )  =  \bar{\mathcal{G}} ( G,H,S_{O}   )   \oplus
\mathcal{G} ( H,S_{H} ) , \]
where $S_{O}  = S -H$, $S_{H}  = S  \cap  H$ and $\oplus$ is the generalized
edge sum operator, as defined in {\cite{Knauer2011}}. The graph $\mathcal{G} (
H,S_{H} )$ is a Cayley graph and the graph $\bar{\mathcal{G}} ( G,H,S_{O} )$
is the graph with vertices $G$ and edges
\[
\begin{dcases}
  ( h,h s )      & \forall  h  \in H,\hspace{2mm} \forall s  \in S_{O} , \\
  ( x,x s^{-1} )  & \forall x  \in \bigcup_{s  \in  S_{O}} H s, \hspace{2mm}  \forall s
  \in  H x  \cap  S_{O} .
\end{dcases}
\]
According to the alternative definition, the group-subgroup pair graph
$\mathcal{G} ( G,H,S )$ can be thought informally as the undirected graph
consisting of an inner Cayley graph of $H$ determined by $S_{H}$ and an outer
graph that connects vertices of $H$ with vertices of $\bigcup_{s  \in  S_{O}}
H s $. The notation $S_{O}$ and $S_{H}$ is used throughout this paper with the
same meaning as in the previous definition.

When $G=H$, the generating set is $S=S \cap H$ and the corresponding graphs
is a Cayley graph, in other words, $\mathcal{G} ( G,H,S ) =\mathcal{G} ( G,S
)$. This justifies the claim that the group-subgroup pair graphs are a
generalization of the Cayley graphs and the seemingly artificial condition of
symmetry of $S \cap H$. When $S$ is empty, we call the resulting pair graph
$\mathcal{G} ( G,H,S )$ or Cayley graph $\mathcal{G} ( G,S )$ trivial.

\begin{example}
  \label{example1}Let $G = \mathbbm{Z}/12\mathbbm{Z}$, $H= \{ 0,3,6,9 \}$, and
  $S= \{ 2,4,5,7,8 \}$, then $S_{H} =  \emptyset$ and $S_{O} =S$. The
  corresponding pair-graph $\mathcal{G} ( G,H,S )$ can be seen on figure
  \ref{graph1}.
\end{example}

\begin{figure}[h]
  \resizebox{5cm}{5cm}{\includegraphics{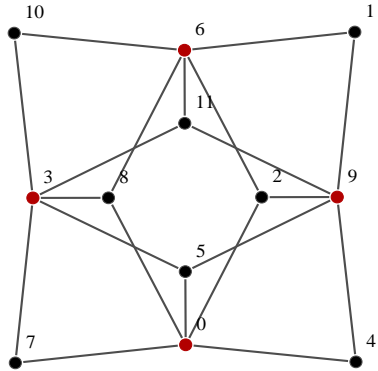}}
  \caption{The pair graph $\mathcal{G} ( \mathbbm{Z}/12\mathbbm{Z} ,
    H ,S )$.}
  \label{graph1}
\end{figure}

\begin{example}
  \label{finitefield}Let $K=\mathbbm{F}_{7^{2}}$ , and the prime field
  $H=\mathbbm{F}_{7}$ of $K$ considered as a subgroup of the additive group
  $K$, then $K$ is the direct sum of seven copies of $H$. Let $\varphi$ be the
  norm map of $K$ as a field extension of $H$, then if $S= \varphi^{-1} ( \{
  5,6 \} )$, we obtain the following pair-graph $\mathcal{G} ( K,H,S )$

  \begin{figure}[h]
    \resizebox{5cm}{5cm}{\includegraphics{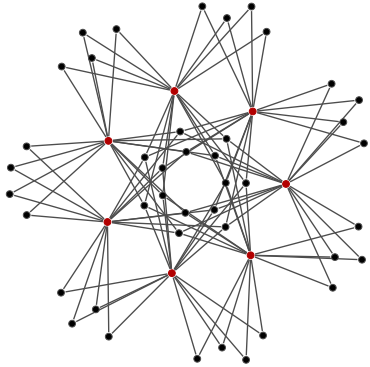}}
    \caption{The pair-graph $\mathcal{G} ( \mathbbm{F}_{7^2} ,
      \mathbbm{F}_7 ,S )$.}
  \end{figure}

  This graph consists of vertices of degree 2,4 and 16.
\end{example}

\begin{example}
  \label{random}Matrix groups over finite fields have been used in the
  construction of families of Ramanujan graphs. Set $G= \tmop{GL}_{2} (
  \mathbbm{F}_{5} )$, where $\mathbbm{F}_{5}$ is the finite field of $5$
  elements and $H= \tmop{SL}_{2} ( \mathbbm{F}_{5} )$, then for a random
  subset $S$ of 7 elements taken from the complement of $H$ in $G$, we obtain
  the following pair-graph $\mathcal{G} ( G,H,S )$.

  \begin{figure}[h]
    \resizebox{7.5cm}{7.5cm}{\includegraphics{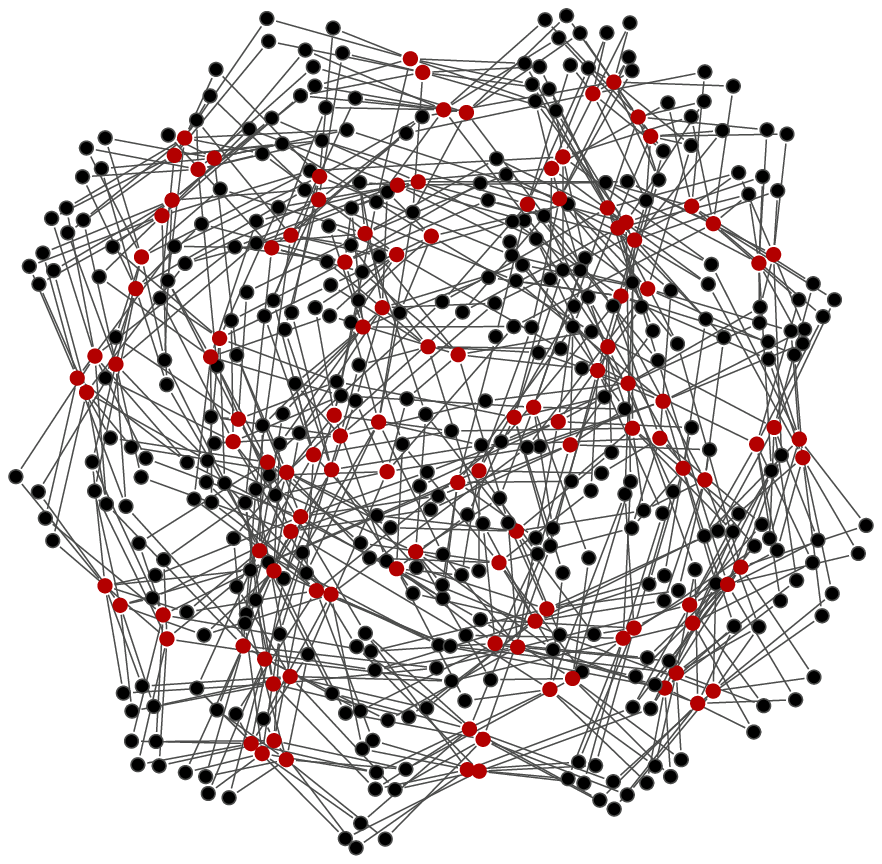}}
    \caption{A pair-graph $\mathcal{G} ( \tmop{GL}_{2} ( \mathbbm{F}_{5} ) ,
      \tmop{SL}_{2} ( \mathbbm{F}_{5} ) ,S )$.}
  \end{figure}

  This connected graph has 480 vertices with vertices of degrees $2,3$ and
  $7$.
\end{example}

Note that none of the graphs of the preceding examples can be constructed as
Cayley graphs.

\begin{remark}
  We briefly mention another generalization of Cayley graphs for group $G$ and
  subgroup $H$, the \tmtextit{Schreier Coset Graph}. For a symmetric subset
  $S$ of $G$, the Schreier coset graph is defined as the graph with the set
  $G/H$ of coset as vertices and where two cosets $H x$ and $H y$ are adjacent
  when there is an $s \in S$ such that
  \[ H x s = H y. \]
  A Schreier Coset graph can have multiples edges and loops (even when $e \nin
  S$). This kind of graphs have been used for coset enumeration techniques. A
  detailed exposition can be found in {\cite{Coxeter1972}}.

  For a given group $G$, the Schreier Coset graph is a Cayley graph when $H=
  \{ e \}$, whereas the group-subgroup pair graph is a Cayley graph when
  $H=G$.
\end{remark}

\subsection{Relation with group matrices}

As stated in the Introduction, one of the motivation for the group-subgroup
pair graph comes from the extension of the group determinant for a
group-subgroup pair, called {\tmem{wreath determinant for group-subgroup
    pair}}s. In this subsection we show how one can relate the adjacency matrix of
a Cayley graph with the group matrix of the corresponding group; then, by
extending the idea for the matrix used for the wreath determinant for
group-subgroup pairs we obtain the rows corresponding to the subgroup on the
adjacency matrix of a certain group-subgroup pair graph, which is enough to
determine the complete adjacency graph.

For a group $G= \{ g_{1} , \ldots ,g_{n} \}$, consider a polynomial ring $R$
containing the indeterminates $x_{g_{i}}$, for $g_{i}   \in G$, then the
\tmtextit{group matrix} is a matrix $\mathcal{M} ( G, \phi )$ in
$\tmop{Mat}_{n,n} ( R )$ defined by
\[ \mathcal{M} ( G, \phi )_{i,j} =x_{g_{i}^{-1} g_{j}}  \]
for $i,j \in [ n ]$ and where $\phi :G \rightarrow [ n ]$ is an enumeration
function for $G$, used implicitly. The determinant $\Theta ( G )$ of the group
matrix is called \tmtextit{group determinant} of $G$ and does not depend on
the chosen enumeration of the elements of $G$. For $i,j$ we have $( g_{i}^{-1}
g_{j} )^{-1} =g^{-1}_{j} g_{i}$, therefore for any element $x_{g}$, the
corresponding transpose element is $x_{g^{-1}}$.

Similarly, for a group $G$ of order $k n$, and subgroup $H$ of order $n$ one
can construct the matrix $\mathcal{M} ( G,H, \phi , \tau ) \in \tmop{Mat}_{n,k
  n} ( R )$ by
\[ \mathcal{M} ( G,H, \phi , \tau )_{i,j} =x_{h_{i}^{-1} g_{j}} , \]
for $h_{i} \in H$, $g_{j} \in G$, $i \in [ n ] , j \in [ n k ]  $and where
$\phi :G \rightarrow [ n k ]$ and \ $\tau :H \rightarrow [ n ]$ are
enumerations functions for $G$ and $H$. Considering only the columns
corresponding to elements of $H$ of the matrix $\mathcal{M} ( G,H, \phi , \tau
)$ one obtains the group matrix of $H$ with respect to the ordering $\tau$.

For a matrix $A \in M_{n,k n}$, the wreath determinant of $A$ is defined as
\[ \tmop{wrdet}_{k} ( A ) = \tmop{det}^{- \frac{1}{k}} ( A_{[ k ]} ) , \]
where $A_{[ k ]}$ is the row $k$-flexing of the matrix $A$ and
$\det^{\alpha}$is the $\alpha$-determinant, for an extensive exposition of the
wreath determinant and its properties the reader is referred to
{\cite{Kimoto2008}}. In the paper {\cite{Hamamoto2014}}, the authors define
the wreath determinant for the pair $G$ and $H$ by
\[ \Theta ( G,H, \phi , \tau ) = \tmop{wrdet}_{k} ( \mathcal{M} ( G,H, \phi ,
\tau ) ) . \]
In contrast with the ordinary group determinant, this wreath determinant for
$G$ and $H$ depends on the enumeration functions $\phi$ and $\tau$.

For a given group $G$ and symmetric subset $S$, by evaluating the
corresponding group matrix $\mathcal{M} ( G, \phi )$ by the rule $x_{s} =1$
for $s \in S$ and $x_{g} =0$ for $g \nin S$ one obtains a symmetric matrix.
Since $g_{i}^{-1} g_{j} =s$ implies $g_{i} s= g_{j}$, the corresponding matrix
is the adjacency matrix of the Cayley graph $\mathcal{G} ( G,S )$.

\begin{example}
  Consider $\mathfrak{S}_{3}$, the symmetric group on three letters with the
  ordering $\phi$ given by $\mathfrak{S}_{3} = \{ e, ( 2,3 ) , ( 1,2 ) , (
  1,2,3 ) , ( 1,3,2 ) , ( 1,3 ) \}$, the group matrix is
  \[ \mathcal{M} ( \mathfrak{S}_{3} , \phi ) = \left(\begin{array}{cccccc}
      x_{1} & x_{2} & x_{3} & x_{4} & x_{5} & x_{6}\\
      x_{2} & x_{1} & x_{4} & x_{3} & x_{6} & x_{5}\\
      x_{3} & x_{5} & x_{1} & x_{6} & x_{2} & x_{4}\\
      x_{5} & x_{3} & x_{6} & x_{1} & x_{4} & x_{2}\\
      x_{4} & x_{6} & x_{2} & x_{5} & x_{1} & x_{3}\\
      x_{6} & x_{4} & x_{5} & x_{2} & x_{3} & x_{1}
    \end{array}\right) , \]
  where $x_{i}$ stands for $x_{g_{i}}$. Considering $S= \{ ( 1,2 ) , ( 1,2,3 )
  , ( 1,3,2 ) \}$ and evaluating in the way mentioned before we get
  \[ A= \left(\begin{array}{cccccc}
      0 & 0 & 1 & 1 & 1 & 0\\
      0 & 0 & 1 & 1 & 0 & 1\\
      1 & 1 & 0 & 0 & 0 & 1\\
      1 & 1 & 0 & 0 & 1 & 0\\
      1 & 0 & 0 & 1 & 0 & 1\\
      0 & 1 & 1 & 0 & 1 & 0
    \end{array}\right) , \]
  which can be verified to be the adjacency matrix of the Cayley graph
  $\mathcal{G} ( \mathfrak{S}_{3} ,S )$.

  \begin{figure}[h]
    \resizebox{5cm}{3.3cm}{\includegraphics{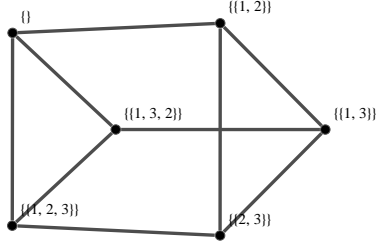}}
    \caption{The Cayley graph $\mathcal{G} ( \mathfrak{S}_{3} ,S )$.}
  \end{figure}
\end{example}

Likewise, for group $G$, subgroup $H$ and subset $S$ as in definition
\ref{def}, by evaluating the group-subgroup matrix with the rule $x_{s} =1$
for $s \in S$ and $x_{g} =0$ otherwise, we obtain a matrix with nonzero
entries $( i,j )$ when $h_{i}^{-1} g_{j} =s \in S$. In other words, there are
ones in the matrix exactly when $h_{i} s=g_{j}$, which is the relation for the
edges of the group-subgroup pair graph $\mathcal{G} ( G,H,S )$ in definition
$\ref{def}$. The resulting matrix corresponds to the rows associated with the
elements of $H$ in the adjacency matrix of the pair-graph $\mathcal{G} ( G,H,S
)$ and can be completed by symmetry to obtain the complete adjacency matrix.

\begin{example}
  Let $G = \mathbbm{Z}/12\mathbbm{Z}$, $H= \{ 0,3,6,9 \}$, and $S= \{
  2,4,5,7,8 \}$ as in example \ref{example1}, the corresponding matrix with
  respect to natural orderings $\phi$ and $\tau$ is
  \[ \mathcal{M} ( \mathbbm{Z}/12\mathbbm{Z},H, \phi , \tau ) =
  \left(\begin{array}{cccccccccccc}
      x_{0} & x_{1} & x_{2} & x_{3} & x_{4} & x_{5} & x_{6} & x_{7} & x_{8} &
      x_{9} & x_{10} & x_{11}\\
      x_{9} & x_{10} & x_{11} & x_{0} & x_{1} & x_{2} & x_{3} & x_{4} & x_{5}
      & x_{6} & x_{7} & x_{8}\\
      x_{6} & x_{7} & x_{8} & x_{9} & x_{10} & x_{11} & x_{0} & x_{1} & x_{2}
      & x_{3} & x_{4} & x_{5}\\
      x_{3} & x_{4} & x_{5} & x_{6} & x_{7} & x_{8} & x_{9} & x_{10} & x_{11}
      & x_{0} & x_{1} & x_{2}
    \end{array}\right) . \]
  Then, evaluating as describe before
  \[ A= \left(\begin{array}{cccccccccccc}
      0 & 0 & 1 & 0 & 1 & 1 & 0 & 1 & 1 & 0 & 0 & 0\\
      0 & 0 & 0 & 0 & 0 & 1 & 0 & 1 & 1 & 0 & 1 & 1\\
      0 & 1 & 1 & 0 & 0 & 0 & 0 & 0 & 1 & 0 & 1 & 1\\
      0 & 1 & 1 & 0 & 1 & 1 & 0 & 0 & 0 & 0 & 0 & 1
    \end{array}\right) , \]
  which can be verified to correspond to the rows of the vertices of $H$ of
  the adjacency matrix of the pair-graph $\mathcal{G} (
  \mathbbm{Z}/12\mathbbm{Z},H,S )$ of example \ref{example1}.
\end{example}

Note that the group matrix can also be defined as $( g_{i} g_{j}^{-1} )$.
Using this definition the resulting Cayley graph is defined by left
multiplication, the same is true for the group-subgroup matrix for the wreath
determinant and the group-subgroup pair graph.

\subsection{Basic properties of pair graphs $\mathcal{G} ( G,H,S )$ }

An isolated vertex is one that is not connected to any other vertices. In
contrast with nontrivial Cayley graphs, group-subgroup pair graphs may contain
isolated vertices even when the generating subset is non empty. The following
result characterizes the presence of isolated vertices in group-subgroup pair
graphs.

\begin{proposition}
  \label{isolated}i) The pair-graph $\mathcal{G} ( G,H,S )$ contains no
  isolated vertices if and only if $S$ contains a representative for each
  coset of $H$ on $G$ different from $H e =H$.

  ii) The vertices $H$ are isolated in $\mathcal{G} ( G,H,S )$ if and only if
  $S$ is the empty set.
\end{proposition}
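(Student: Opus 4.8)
The plan is to reduce everything to a single structural observation about where the edges of $\mathcal{G}(G,H,S)$ can lie, and then read off both statements. First I would record that, by Definition \ref{def} (equivalently, by the description $\mathcal{G}(G,H,S) = \bar{\mathcal{G}}(G,H,S_O) \oplus \mathcal{G}(H,S_H)$), every edge has one of two forms: an \emph{outer} edge $\{h,hs\}$ with $h \in H$ and $s \in S_O$, whose endpoint $hs$ lies \emph{outside} $H$; or an \emph{inner} edge $\{h,hs\}$ with $h \in H$ and $s \in S_H$, which lies entirely inside $H$. Hence every edge meets $H$, and in particular $G \setminus H$ is an independent set (the symmetry of $S_H$ is precisely what makes the inner edges genuinely undirected, so the picture is consistent).

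Next I would characterise the neighbours of an individual vertex. If $x \in G \setminus H$, then $x$ lies only on outer edges, so $h \in H$ is adjacent to $x$ iff $x = hs$ for some $s \in S_O$, i.e.\ iff $h = x s^{-1} \in H$ for some $s \in S_O$. The point to notice is that $x s^{-1} \in H \iff Hx = Hs$, and any $s \in S$ with $Hs = Hx$ automatically lies in $S_O$, since $Hx \neq H$ forces $s \notin H$; hence $x$ is non-isolated if and only if $S$ contains a representative of the coset $Hx$. If instead $x = h \in H$, then for \emph{every} $s \in S$ the pair $\{h,hs\}$ is an edge (outer or inner according as $s \in S_O$ or $s \in S_H$), so $h$ is non-isolated as soon as $S \neq \emptyset$; whereas if $S = \emptyset$ there are no edges at all, so every vertex of $G$ is isolated.

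From these two facts the proposition falls out. For (ii): if $S = \emptyset$ every vertex of $H$ is isolated, and if $S \neq \emptyset$ then by the second case above no vertex of $H$ is isolated, so the vertices of $H$ are not isolated. For (i), the forward implication is immediate: given a coset $C \neq H$, choose $x \in C \subseteq G \setminus H$; since $x$ is not isolated, $S$ meets $C = Hx$. For the converse I would first set aside the degenerate case $H = G$ (where $\mathcal{G}(G,H,S)$ is the Cayley graph $\mathcal{G}(G,S)$), and otherwise note that the hypothesis that $S$ meets every coset $\neq H$ forces $S$ to contain an element outside $H$, so $S \neq \emptyset$; then every $x \in G \setminus H$ is non-isolated because $S$ meets $Hx$, and every $h \in H$ is non-isolated because $S \neq \emptyset$. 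I do not expect a serious obstacle: the only place that needs care is the reduction in the neighbour analysis of $x \notin H$ — namely that an $s \in S$ with $Hs = Hx$ is automatically in $S_O$, so that passing from $S$ to $S_O$ imposes no extra constraint — together with keeping the degenerate situations ($S$ empty, $H = G$, and loops when $e \in S$) straight.
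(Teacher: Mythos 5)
Your proposal is correct and follows essentially the same route as the paper: a direct reading of the definition showing that a vertex $x\in G-H$ has a neighbour precisely when $S$ meets the coset $Hx$, while vertices of $H$ have degree $|S|$. You are merely more explicit about the degenerate cases ($H=G$, $e\in S$, and the automatic inclusion of a coset representative in $S_O$), which the paper's terser proof glosses over.
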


\begin{proof}
  Suppose $S$ is not empty and contains a representative for each coset, then
  take $x  \in  G-H$, and $s \in  S$ the representative of $H x$, then there
  is $h \in  H$ such that $h s = x$, and therefore $x$ is connected to $h$.
  Conversely, if there are no isolated vertices, by the definition we must
  have $\bigcup_{s  \in  S_{O}} H s = G$. \ The second statement follows
  directly from the definition.
\end{proof}

\begin{example}
  Consider any group $G$ of order $n$ and $H= \{ e \}$, then the graph
  $\mathcal{G} ( G,H,S )$ with $S=G-H$ has no isolated vertices. In fact,
  $\mathcal{G} ( G,H,S )$ is a $T_{n-1}$ {\tmem{star graph}}.
\end{example}

\begin{figure}[h]
  \resizebox{3cm}{3cm}{\includegraphics{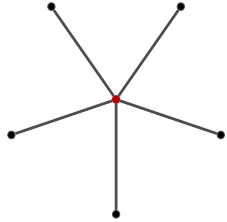}}
  \caption{A $\mathcal{G} ( G, \{ e \} ,S )$ graph with $| G | =6$ and $S=G-
    \{ e \}$.}
\end{figure}

The second part of the proposition shows the relation with Cayley graphs, as a
Cayley graph is empty if and only if $S$ is the empty set.

A graph in which all the vertices have the same degree is called a
\tmtextit{regular graph}, more precisely, if the vertices have degree $k$, the
graph is called a $k$-\tmtextit{regular graph.} An important property of a
Cayley graph $\mathcal{G} ( G,S )$ is that it is $| S |$-regular. Example
\ref{example1} shows that this is not true in general for $\mathcal{G} ( G,H,S
)$, but there is still uniformity on the degree of the vertices in each coset.

\begin{proposition}
  \label{coset}In a pair-graph $\mathcal{G} ( G,H,S )$, all the vertices in
  the same coset have the same degree. \label{Cor1}Namely, the vertices in $H$
  have degree $| S |$ and for $x  \nin  H$ the degree of the vertices in the
  coset $H x$ is $| S \cap H x |$.
\end{proposition}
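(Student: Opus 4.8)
The plan is to read off the degree of an arbitrary vertex $v \in G$ directly from the edge rules of Definition~\ref{def}, or equivalently from the decomposition $\mathcal{G}(G,H,S) = \bar{\mathcal{G}}(G,H,S_O) \oplus \mathcal{G}(H,S_H)$: the degree of $v$ equals its degree in $\bar{\mathcal{G}}(G,H,S_O)$, plus, when $v \in H$, its degree in the inner Cayley graph $\mathcal{G}(H,S_H)$. Since $|S| = |S_O| + |S_H|$, it suffices to treat the two cases $v \in H$ and $v \notin H$ separately and then to observe that each count depends on $v$ only through the coset $Hv$.

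First I would record the elementary incidence facts. Every inner edge $(h,hs)$ with $s \in S_H$ joins two vertices of $H$; every outer edge $(h,hs)$ with $s \in S_O = S - H$ joins $h \in H$ to $hs$, and $hs \notin H$ because $s \notin H$. Hence outer edges and inner edges form disjoint edge sets, outer edges never create loops, and no edge of $\mathcal{G}(G,H,S)$ joins two vertices lying outside $H$.

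For $v = h \in H$: the inner edges at $h$ are exactly the edges at $h$ in the Cayley graph $\mathcal{G}(H,S_H)$, which is $|S_H|$-regular (the standard regularity of a Cayley graph on a symmetric set, recalled in the excerpt). The outer edges at $h$ are exactly the $(h,hs)$ with $s \in S_O$; since $s \mapsto hs$ is injective, these contribute $|S_O|$ further pairwise-distinct neighbours, all lying in $G - H$, so there is no overlap with the inner neighbours. Therefore $\deg(h) = |S_H| + |S_O| = |S|$. For $v = x \in G - H$: $x$ lies on no inner edge, and by the incidence facts above every outer edge at $x$ has the form $(xs^{-1},x)$ with $s \in S_O$ and $xs^{-1} \in H$. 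Thus the neighbours of $x$ are the elements $xs^{-1}$ for $s$ in the set $\{\, s \in S_O : xs^{-1} \in H \,\}$, and these are distinct since $s \mapsto xs^{-1}$ is injective. Now $xs^{-1} \in H \iff Hs = Hx \iff s \in Hx$, so this index set equals $S_O \cap Hx$; and since $x \notin H$ the coset $Hx$ is disjoint from $H$ while $S_H \subseteq H$, so $S \cap Hx = (S_O \cup S_H) \cap Hx = S_O \cap Hx$. Therefore $\deg(x) = |S \cap Hx|$.

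It then remains only to note that the degree is constant on cosets: every vertex of $H = He$ has degree $|S|$, and if $x' \in Hx$ with $x \notin H$ then $Hx' = Hx$, whence $\deg(x') = |S \cap Hx'| = |S \cap Hx|$. The one step carrying the actual content is the identity $\{\, s \in S_O : xs^{-1} \in H \,\} = S \cap Hx$ used in the case $x \notin H$; the remaining arguments are bookkeeping, the only subtlety being the (harmless) check that no outer edge is a loop, which holds since $hs \neq h$ for $s \notin H$.
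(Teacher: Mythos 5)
Your proof is correct and follows the same route as the paper, which simply reads the degrees off the definition of the inner Cayley graph and the outer graph $\bar{\mathcal{G}}(G,H,S_O)$; you merely spell out the details (injectivity of $s \mapsto hs$ and $s \mapsto xs^{-1}$, and the identity $\{s \in S_O : xs^{-1} \in H\} = S \cap Hx$) that the paper declares ``clear by the definition.'' No issues.
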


\begin{proof}
  It is clear by the definition of the outer graph $\bar{\mathcal{G}} (
  G,H,S_{O} )$ that any vertices $x,y  \in  G-H$ in the same coset $H x$ have
  the same degree $| H x  \cap  S_{O} |  =  | H y  \cap  S_{O} |$. The
  vertices in $H$ have degree $| S |$ by construction.
\end{proof}

Returning to example \ref{example1}, $( H+1 ) \cap S= \{ 4,7 \}$, $( H+2 )
\cap S= \{ 2,5,8 \}$, and the cardinality of these sets corresponds to the
degree of the vertex in the respective cosets. We call those types of graphs,
\tmtextit{multi-regular} or more precisely $p_{1} ,p_{2} , \ldots
,p_{r}$-regular graphs, where $p_{i}$ is the degree of the vertices on a given
partition. The preceding discussion suggests that the structure of $G/H$, the
set of cosets of $H$ on G, is closely related to the structure of the graph.

\begin{corollary}
  Let $G$ be a group and $H$ a subgroup of index $[ G:H ] =k+1$. For a \
  subset $S \subset G$ with $S_{H}$ symmetric, consider the pair-graph
  $\mathcal{G} ( G,H,S )$. If $x_{1} , \ldots ,x_{k}$ is a set of
  representatives of the cosets of H incongruent to $e$, then for $h \in H$,
  \[ \deg ( h ) = | S | \geqslant    \sum_{i=1}^{k} | H x_{i}   \cap  S_{O} |
  = | S_{O} | = \sum_{i=1}^{k} \deg ( x_{i} ) , \]
  with equality only when $S_{H} = \emptyset$. In particular,  a nontrivial
  $\mathcal{G} ( G,H,S )$ is regular if and only if $S_{H} =  \emptyset$ and
  $[ G:H ] =2$, or $[ G:H ] =1$.
\end{corollary}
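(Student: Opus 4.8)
The plan is to read everything off Proposition~\ref{coset}, which already determines the degree of each vertex from its coset, together with the coset decomposition of $G$. First I would record that, since $[G:H]=k+1$, the sets $H, Hx_1,\dots,Hx_k$ form a partition of $G$; intersecting this partition with $S$ and deleting the block lying inside $H$ gives a disjoint union $S_O = S-H = \bigsqcup_{i=1}^k (S\cap Hx_i)$, and because each $Hx_i$ is disjoint from $H$ we have $S\cap Hx_i = S_O\cap Hx_i$. Hence $|S_O| = \sum_{i=1}^k |Hx_i\cap S_O|$, and by Proposition~\ref{coset} the summand $|Hx_i\cap S_O|$ equals $\deg(x_i)$ while $\deg(h)=|S|$ for $h\in H$; this gives the chain of equalities in the display. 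The inequality $|S|\geqslant|S_O|$ together with its equality case then follows from the disjoint decomposition $S = S_H\sqcup S_O$, which yields $|S| = |S_H| + |S_O|$, so equality forces $|S_H|=0$, i.e.\ $S_H=\emptyset$.

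For the regularity statement I would argue in both directions. If $[G:H]=1$ then $H=G$, $S=S_H$, and $\mathcal{G}(G,H,S) = \mathcal{G}(G,S)$ is a Cayley graph, hence $|S|$-regular. Conversely, assume $\mathcal{G}(G,H,S)$ is nontrivial (so $S\neq\emptyset$, i.e.\ $|S|\geqslant1$) and regular. Regularity forces the degrees on the cosets to agree: $\deg(h)=\deg(x_i)$ for every $i\in[k]$, that is $|S| = |Hx_i\cap S_O|$. Summing over $i$ and invoking the display gives $k|S| = |S_O| \leqslant |S|$, hence $(k-1)|S|\leqslant 0$; since $|S|\geqslant 1$ this forces $k\leqslant 1$. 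If $k=0$ we are back in the Cayley case $[G:H]=1$; if $k=1$, i.e.\ $[G:H]=2$, then the relation $|S|=|S_O|$ just obtained together with the first part yields $S_H=\emptyset$. For the remaining implication, if $[G:H]=2$ and $S_H=\emptyset$ then the only cosets are $H$ and $Hx_1$, and by Proposition~\ref{coset} and the display their common degree is $|S| = |S_O| = |Hx_1\cap S_O|$, so the graph is regular. Together these establish that a nontrivial $\mathcal{G}(G,H,S)$ is regular exactly when $S_H=\emptyset$ and $[G:H]=2$, or $[G:H]=1$.

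I do not expect a genuine obstacle here — the corollary is bookkeeping built on Proposition~\ref{coset} — but the one point that deserves care is the nontriviality hypothesis. It is precisely what allows cancelling $|S|$ in the step $(k-1)|S|\leqslant 0$; without it the trivial pair graph (no edges, every vertex of degree $0$) would be vacuously $0$-regular for every index, so the restriction to nontrivial graphs in the ``in particular'' clause cannot be dropped.
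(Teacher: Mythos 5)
Your argument is correct and follows essentially the same route as the paper: read the degrees off Proposition~\ref{coset}, use the disjoint decomposition $S = S_H \sqcup S_O$ and the coset partition of $S_O$ for the displayed chain, and then force $k\in\{0,1\}$ from $k|S| = |S_O| \leqslant |S|$ in the regularity claim. Your explicit remark that nontriviality ($|S|\geqslant 1$) is what licenses cancelling $|S|$ is a point the paper leaves implicit, but it is not a different method.
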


\begin{proof}
  The inequality follows from Proposition \ref{Cor1} and the fact that $| S |
  \geqslant   | S_{O} |$, and the equality happens only when $S_{H} =S \cap
  H= \emptyset$. The \tmtextit{if} part of the proof follows directly from the
  inequality and the definitions. For the \tmtextit{only if} part, consider a
  $j$-regular $\mathcal{G} ( G,H,S )$ graph, then by the preceding proposition
  $| S | =j$ and $| H x \cap S_{O} | =j$ for $x \nin H$. It follows from the
  inequality above that $| S_{O} |  = k j=k | S |$ and therefore $k$ is
  necessarily $0$ or $1$. If $[ G:H ] =2$, then $| S_{O} |  =  | S |$, so $S =
  S_{O}$ and the case $[ G:H ] =1$ gives a Cayley graph.
\end{proof}

Note that in view of Proposition \ref{coset}, we can consider $\deg ( H x )$
as the degree of any of the elements of the coset. In that case, the above
identity can be written as
\[ \deg ( H )   \geqslant   \sum_{i} \deg ( H x_{i} ) , \]
with equality happening only when $S_{H}$ is empty.

\section{General properties of pair-graphs $\mathcal{G} ( G,H,S )$ }

\subsection{Connectedness and connected components}

In this section we consider the connectedness for pair graphs $\mathcal{G} (
G,H,S )$. Recall that a Cayley graph $\mathcal{G} ( G,S )$ is connected if and
only if $\langle S \rangle =G$. We begin by considering the case $S_{H} =S
\cap H$ empty, in other words, none of the vertices of $H$ are adjacent in
$\mathcal{G} ( G,H,S )$.

\begin{lemma}
  \label{conectbip}Let $G \nocomma  $ be a group, H a subgroup and \ $S
  \subset  G$ subset with $S_{H}$ symmetric. If $S_{H} = \emptyset$ then in
  the pair-graph $\mathcal{G} ( G,H,S )$ the vertices of $H$ are in the same
  connected component if and only if $\langle H \cap ( S S^{-1} ) \rangle  $ =
  H.
\end{lemma}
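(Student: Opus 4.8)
The plan is to analyze the connected component of the identity vertex $e$ and identify exactly which vertices of $H$ it contains. Since $S_H = \emptyset$, no two vertices of $H$ are ever adjacent, so any path between two vertices of $H$ must alternate between $H$ and the outer vertices $\bigcup_{s \in S_O} Hs$; in fact a path of minimal length between $h, h' \in H$ visits $H$, then an outer vertex, then $H$, and so on. The key observation is that the length-two steps within $H$ are exactly the allowed moves: from $h \in H$ one can reach an outer vertex $hs$ with $s \in S_O$ (equivalently $s \in Hh^{-1}(hs)$ in the notation of the outer graph), and from $hs$ one can step to any $h'' \in H$ with $h''(s')^{-1} = hs$ for some $s' \in S_O \cap Hh''$, i.e. $h'' = hs s'^{-1}$ with $ss'^{-1} \in H$. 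So in one "out-and-back" excursion the identity $e$ reaches precisely the elements $s s'^{-1}$ with $s, s' \in S_O$ and $s s'^{-1} \in H$, that is, the elements of $H \cap (S_O S_O^{-1})$. Since $S_H = \emptyset$ we have $S_O = S$, so this set is $H \cap (S S^{-1})$.

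First I would make the above precise: define a walk in the induced subgraph on $H$ where $h \sim h'$ whenever there is a common outer neighbor, and show its edge set is exactly $\{(h, h t) : h \in H,\ t \in H \cap (SS^{-1})\}$ — this is just unwinding Definition \ref{def} and the description of $\bar{\mathcal{G}}(G,H,S_O)$, using that $s s'^{-1} \in H$ forces the intermediate vertex $hs$ to have both $h$ and $hss'^{-1}$ as $H$-neighbors. Then the connected component of $e$ restricted to $H$ is precisely the orbit of $e$ under right-multiplication by the set $H \cap (SS^{-1})$, which is the subgroup $\langle H \cap (SS^{-1})\rangle$ acting on $H$; note $H \cap (SS^{-1})$ is automatically symmetric, so its generated subgroup is well-defined and the reachability relation is symmetric and transitive. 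Hence all vertices of $H$ lie in one component if and only if this orbit is all of $H$, i.e. $\langle H \cap (SS^{-1})\rangle = H$.

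Conversely, I would check that no vertex of $H$ outside $\langle H \cap (SS^{-1})\rangle \cdot e$ can be reached: any walk from $e$ that returns to $H$ after $2m$ steps, by induction on $m$, lands in $\langle H \cap (SS^{-1})\rangle$, since each out-and-back excursion right-multiplies the current $H$-vertex by an element of $H \cap (SS^{-1})$ (here is where $S_H = \emptyset$ is essential — there are no "shortcut" edges inside $H$, so the only way to move among vertices of $H$ is via these excursions). This closes both directions.

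The main obstacle I anticipate is the bookkeeping in the outer graph $\bar{\mathcal{G}}(G,H,S_O)$: one must be careful that when $h \in H$ goes to $hs$ and then to some $h' \in H$, the return edge $(h', h'(s')^{-1})$ is genuinely an edge of $\bar{\mathcal{G}}(G,H,S_O)$, which requires $h's'^{-1} = hs$ with $s' \in S_O \cap Hh'$ — and conversely that every neighbor in $H$ of an outer vertex $x = hs$ arises this way. Verifying that these conditions are equivalent to "$h' = h t$ for some $t \in H \cap (SS^{-1})$" is the one genuinely technical point; everything else is a standard connected-component/orbit argument.
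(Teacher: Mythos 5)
Your proposal is correct and follows essentially the same route as the paper: since $S_H=\emptyset$ any path between vertices of $H$ alternates with outer vertices, and each two-step ``out-and-back'' excursion $h \to hs \to hss'^{-1}$ corresponds exactly to right multiplication by an element of $H\cap(SS^{-1})$, so the $H$-part of the component of $e$ is $\langle H\cap(SS^{-1})\rangle$. The paper phrases this as an explicit word decomposition $h=s_1s_2^{-1}\cdots s_{n-1}s_n^{-1}$ in one direction and a chain of equations $x_i=h_is_{2i}=h_{i+1}s_{2i+1}$ in the other, but the content is identical to your orbit argument.
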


\begin{proof}
  If $\langle H \cap ( S S^{-1} ) \rangle =H$, then it suffices to prove that
  the identity $e$ is connected to an arbitrary $h \in H$. For $h \in H$, we
  have $h = s_{1} s_{2}^{-1} \ldots s_{n-1} s_{n}^{-1}$ with $s_{i}
  s_{i+1}^{-1} \in H$, then if we set $h_{1} =s_{1} s_{2}^{-1} \ldots s_{n-3}
  s_{n-2}^{-1}$, $h_{1}$ is adjacent to $h_{1} s_{n-1} =x_{1}$ and $h$ is
  adjacent to $h s_{n}  = h_{1} s_{n-1} =x_{1}$ so $h_{1}$ is connected to
  $h$. By repeating this process we conclude that $e$ is connected to $h$.

  On the other hand, if in the graph $\mathcal{G} ( G,H,S )$, all the vertices
  of $H$ are in the same connected component, $h \in H$ is connected to $e \in
  H$. Since there are no direct connections between two elements of $H$ or
  $G-H$, there must be path from $e$ to $h$ where every even vertex is is an
  element of $H$, so we have a sequence $h_{0} =e,h_{1} , \ldots ,h_{n-1}
  ,h_{n} =h$ of elements of $H$, where $h_{i}$ and $h_{i+1}$ is connected to
  $x_{i}$ $i=0,2, \ldots ,n-1$ with $x_{i} \in G-H$. That is, we have a
  sequence of edges $( h_{0} ,x_{0} ) , ( x_{0} ,h_{1} ) \ldots ( h_{n-1}
  ,x_{n-1} ) , ( x_{n-1} ,h_{n} )$, as shown in the following diagram.

  \begin{figure}[h]
    \includegraphics{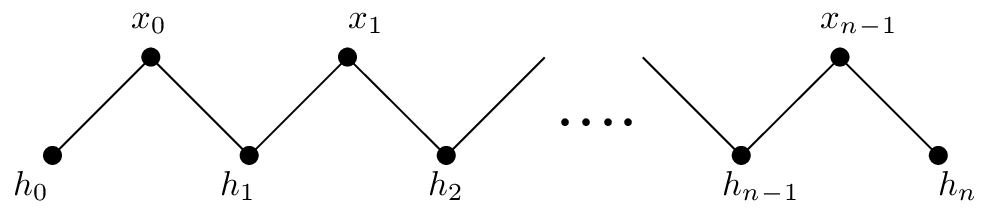}
    \caption{The path from $h_0$ to $h_n$.}
  \end{figure}

  Then, for $s_{i}   \in S,$
  \begin{eqnarray*}
    x_{0} =h_{0} s_{0} & , & x_{0} =h_{1} s_{1}\\
    x_{1} =h_{1} s_{2} & , & x_{1} =h_{2} s_{3}\\
    \vdots  & & \vdots\\
    x_{n-1} =h_{n-1} s_{2n-2} & , & x_{n-1} =h_{n-1} s_{2n-1}
  \end{eqnarray*}
  thus,
  \begin{eqnarray*}
    s_{0} =h_{0} s_{0} =h_{1} s_{1} &  & h_{1} =s_{0} s_{1}^{-1}\\
    h_{1} s_{2} =h_{2} s_{3} &  & h_{2} =h_{1} s_{2} s_{3}^{-1}\\
    \vdots & \Rightarrow & \vdots\\
    h_{n-1} s_{n-2} =h_{n} s_{2n-1} &  & h =h_{n-1} s_{n-2} s_{n-1}^{-1} ,
  \end{eqnarray*}
  it follows that $s_{i} s_{i+1^{}}^{-1} \in H$ and $h \in \langle H \cap S
  S^{^{-1}} \rangle$.
\end{proof}

Note that since a group-subgroup pair graph may contain isolated vertices, the
condition of the lemma alone is not sufficient for connectedness.

\begin{proposition}
  \label{connectbip}With the same notation as before, if $S_{H} = \emptyset$
  then the pair-graph $\mathcal{G} ( G,H,S )$ is connected if and only if
  $\langle H \cap S S^{-1} \rangle =H$ and S contains representatives of all
  the cosets of $H $different from $H$.
\end{proposition}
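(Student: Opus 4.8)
The plan is to combine the two previously-established results—Lemma~\ref{conectbip} on when the vertices of $H$ lie in a single component, and Proposition~\ref{isolated} on isolated vertices—and then show that, once $H$ is connected and every non-trivial coset is represented in $S$, every vertex of $G$ is pulled into the same component as $H$. Throughout we use $S_H=\emptyset$, so that the graph is $\bar{\mathcal{G}}(G,H,S_O)$ with $S_O=S$, and the only edges are of the form $(h,hs)$ with $h\in H$, $s\in S$.

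First I would prove the forward implication. Assume $\mathcal{G}(G,H,S)$ is connected. Since the graph has more than one vertex (the trivial case being excluded by the hypothesis, or handled separately), it has no isolated vertices, so by Proposition~\ref{isolated}(i), $S$ contains a representative for every coset of $H$ different from $H$; equivalently $\bigcup_{s\in S}Hs=G$. Connectedness in particular forces all vertices of $H$ into one component, so Lemma~\ref{conectbip} gives $\langle H\cap SS^{-1}\rangle=H$. That disposes of the ``only if'' direction.

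For the converse, suppose $\langle H\cap SS^{-1}\rangle=H$ and $S$ meets every non-trivial coset. By Lemma~\ref{conectbip} all of $H$ sits in one connected component $C$. Now take an arbitrary $x\in G-H$. Its coset $Hx$ contains some $s\in S$, say $Hx=Hs$, so $x=hs$ for some $h\in H$; by definition of the outer graph this means $x$ is adjacent to $h\in H\subseteq C$. Hence $x\in C$ as well. Since $x$ was arbitrary, $C=G$ and the graph is connected. I would also note, as the paper's preceding remark flags, that the hypothesis ``$S$ meets every non-trivial coset'' is exactly what rules out the isolated-vertex obstruction that would otherwise defeat the condition of the lemma.

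The only real subtlety—hence the ``main obstacle''—is bookkeeping around degenerate cases: if $S=\emptyset$ then $G=H$ is forced by the coset condition only when $[G:H]=1$, and the graph is the trivial (edgeless) Cayley graph on $H$, which is connected iff $|H|=1$; one should make sure the statement is read as excluding this or that both sides fail together. Beyond that, every step is a direct appeal to Lemma~\ref{conectbip} and Proposition~\ref{isolated} together with the one-line observation that a representative of $Hx$ lying in $S$ yields an edge from $x$ into $H$, so no genuinely new argument is needed.
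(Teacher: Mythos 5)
Your proof is correct and follows essentially the same route as the paper: the paper's own (one-sentence) argument likewise combines Lemma~\ref{conectbip}, Proposition~\ref{isolated}, and the observation that any $x\in G-H$ whose coset meets $S$ is adjacent to some $h\in H$ and hence connected to $e$. You simply spell out both directions and the degenerate cases more explicitly, which is fine.
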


\begin{proof}
  The result follows from part follows from Lemma \ref{conectbip}, Proposition
  \ref{isolated} and the observation that any vertex $x \in G-H$ must be
  connected to some $h \in H$ which is in turn connected to the identity $e
  \in H$.
\end{proof}

\begin{theorem}
  \label{connectness}A pair-graph $\mathcal{G} ( G,H,S )  $is connected if and
  only if
  \[ \langle H \cap ( S_{H} \cup S_{o} S_{o}^{-1} ) \rangle = H \]
  and $S$ contains representatives of all the cosets of $H$ different
  from the coset $H$.
\end{theorem}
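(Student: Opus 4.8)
The plan is to reduce the general case to the already-established case $S_H = \emptyset$ by a careful bookkeeping of which edges come from the inner Cayley graph $\mathcal{G}(H,S_H)$ and which come from the outer graph $\bar{\mathcal{G}}(G,H,S_O)$. Recall from Definition \ref{def} that $\mathcal{G}(G,H,S) = \bar{\mathcal{G}}(G,H,S_O) \oplus \mathcal{G}(H,S_H)$, so a path in $\mathcal{G}(G,H,S)$ decomposes into segments using edges of one type or the other. First I would observe that the condition ``$S$ contains representatives of all cosets of $H$ different from $H$'' is exactly Proposition \ref{isolated}(i), and it only involves $S_O$; it is clearly necessary since a vertex $x \in G - H$ can only be reached through an outer edge incident to it. So it remains to show that, \emph{given} this no-isolated-vertex condition, the graph is connected if and only if $\langle H \cap (S_H \cup S_O S_O^{-1}) \rangle = H$.

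For this, the key reduction is that $\mathcal{G}(G,H,S)$ is connected if and only if (a) all vertices of $H$ lie in one connected component, and (b) every $x \in G - H$ is connected to some $h \in H$; and (b) follows immediately from the no-isolated-vertex condition together with the structure of the outer graph. Hence everything comes down to showing: all of $H$ is in one component of $\mathcal{G}(G,H,S)$ if and only if $\langle H \cap (S_H \cup S_O S_O^{-1}) \rangle = H$. For the ``if'' direction, I would show $e$ is connected to an arbitrary $h \in H$ by writing $h$ as a product of generators drawn from $H \cap (S_H \cup S_O S_O^{-1})$: a factor $s \in S_H$ corresponds to a single inner edge $(h', h's)$, while a factor $s_i s_{i+1}^{-1} \in S_O S_O^{-1}$ corresponds to a length-two detour $h' \to h' s_i \to h' s_i s_{i+1}^{-1}$ through a vertex of $G - H$, exactly as in the proof of Lemma \ref{conectbip}. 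Concatenating these realizes any word in the generators as a walk in $H$'s component.

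For the ``only if'' direction, suppose all vertices of $H$ are in one component and take a path from $e$ to $h \in H$. I would split this path at its visits to $H$: between consecutive $H$-vertices $h_j, h_{j+1}$ the path either uses a single inner edge (giving $h_{j+1} = h_j s$ with $s \in S_H$, hence $h_j^{-1} h_{j+1} \in H \cap S_H$) or it leaves $H$, and since the only edges between an $H$-vertex and a $(G-H)$-vertex are outer edges, and an outer edge from $x \in G-H$ goes back to a vertex of $H$, such an excursion has the form $h_j \to x \to h_{j+1}$ with $x = h_j s_i = h_{j+1} s_{i'}$, $s_i, s_{i'} \in S_O$, so $h_j^{-1} h_{j+1} = s_i s_{i'}^{-1} \in H \cap S_O S_O^{-1}$. (A longer excursion through $G-H$ cannot occur, since $G-H$ vertices are mutually non-adjacent.) Multiplying these telescoping factors gives $h = h_0^{-1} h_n \in \langle H \cap (S_H \cup S_O S_O^{-1}) \rangle$. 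The one point requiring care — the main obstacle, such as it is — is handling the boundary/degenerate cases cleanly: a path that never leaves $H$, the interaction with the symmetry requirement on $S_H$ (needed so that inner edges are undirected and the argument is symmetric in the two endpoints), and checking that an outer excursion is genuinely forced to return to $H$ after one step. Once those are nailed down, the result is just the conjunction of Lemma \ref{conectbip}'s argument adapted to include $S_H$-edges and Proposition \ref{isolated}.
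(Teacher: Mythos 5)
Your proposal is correct and follows essentially the same route as the paper: the paper likewise proves the theorem by rerunning the argument of Lemma \ref{conectbip} with the extra case of an inner edge $h_2 = h_1 s_H$, $s_H \in S_H$, and then invokes Proposition \ref{isolated} as in Proposition \ref{connectbip} to handle the vertices of $G-H$. Your write-up is in fact more explicit than the paper's about why an excursion into $G-H$ must return to $H$ after one step and how the telescoping product yields $h \in \langle H \cap (S_H \cup S_O S_O^{-1})\rangle$, but the underlying argument is the same.
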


\begin{proof}
  First we see that the vertices of $H$ are in the same connected component if
  and only if $\langle H \cap ( S_{H} \cup S_{o} S_{o}^{-1} ) \rangle$=H. The
  proof of this fact is the same as that of Lemma \ref{conectbip} while
  considering that in the path from a $e  \in H$ to $h \in H$ there may be
  edges connecting elements $h_{1} ,h_{2}$ from $H$, in such case we have
  $h_{2} =h_{1} s_{H}$, with $s_{H} \in S_{H}$. Then the result follows like
  in Proposition \ref{connectbip}.
\end{proof}

\begin{example}
  \label{example2}Let $G=\mathbbm{Z}/12\mathbbm{Z}$, $H \cong
  \mathbbm{Z}/4\mathbbm{Z}$. Set $S_{1} = \{ 1,7 \}$ and $S_{2} = \{
  4,5,6,10,11 \}$, the corresponding group-subgroup pair graphs are the
  following.

  \begin{figure}[h]
    \resizebox{3cm}{5cm}{\includegraphics{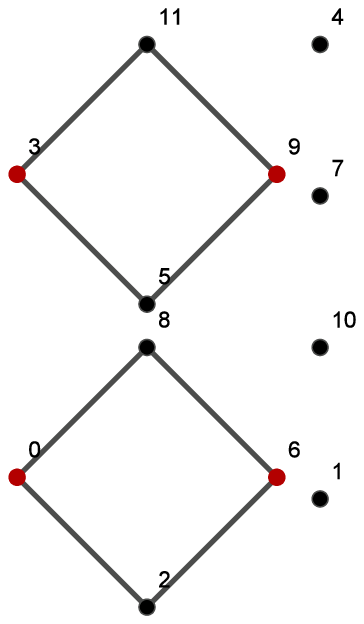}} \ \ \ \
    \ \ \ \ \ \ \ \ \
    \resizebox{3cm}{5cm}{\includegraphics{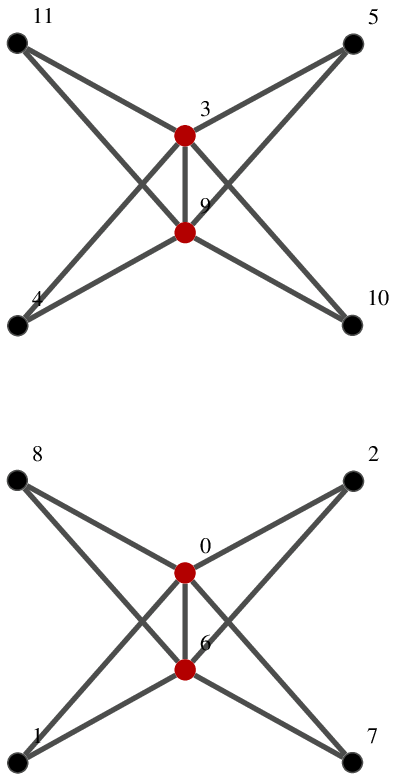}}
    \caption{The pair-graphs $\mathcal{G} ( G,H,S_{1} )$ and $\mathcal{G} ( G,H,S_{2} )$.}
  \end{figure}

  Note that $\langle H \cap S_{1} S_{1} \rangle^{-1} = \{ 0,6 \}$ \ and
  $\langle H \cap ( S_{H_{2}} \cup S_{O_{2}} S_{O_{2}}^{-1} ) \rangle = \{ 0,6
  \}$, so neither graph is connected, as can be seen in the diagrams. On the
  other hand, as there are no elements of the coset $H+5= \{ 5,8,11,2 \}$ on
  $S_{1}$, all the vertices of that coset are isolated on $\mathcal{G} (
  G,H,S_{1} )$.
\end{example}

If a graph is not connected, the characterization of the connected components
of the graph is desirable. For Cayley graphs, the connected component of the
identity is the subgroup $\langle S \rangle$, and each of the cosets in $G$
are the connected components of the graph. This identification is not
identical for group-subgroup pair graphs as the connected component of the
identity may include vertices from $G-H$. In particular, the subgroup $\langle
H \cap ( S_{H} \cup S_{o} S_{o}^{-1} ) \rangle$ of $H$ contains the elements
of $H$ that are in the connected component of the identity and the cosets of
this subgroup are the intersection of $H$ with certain connected components of
the graph.

\begin{proposition}
  \label{comps}With the same notation as before, let $U= \langle H \cap (
  S_{H} \cup S_{O} S_{O}^{-1} ) \rangle$, then the identity component
  $~ \Gamma_{e} $ of $~ \mathcal{G}(G,H,S) $ consists of the vertices $~ U \cup
  \left( \bigcup_{s \in S_{O}} U s \right)$. The remaining connected
  components of the pair-graph $\mathcal{G} ( G,H,S )$ are either of the type
  $\Gamma_{h} =h \Gamma_{e}$ for $h \in H$ or the type $\{ x \}$ for $x \in
  G-H$.
\end{proposition}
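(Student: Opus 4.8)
The plan is to describe $\Gamma_e$ explicitly and then use the group action of $H$ (and the coset structure of $G$) to handle all the other components. First I would identify the vertices of $\Gamma_e$ that lie in $H$: by the argument in the proof of Theorem \ref{connectness} (which is the argument of Lemma \ref{conectbip} adapted to allow $S_H$-edges), the elements of $H$ lying in the connected component of $e$ are exactly the elements of $U=\langle H\cap(S_H\cup S_OS_O^{-1})\rangle$. Next I would observe that every vertex $x\in G-H$ in $\Gamma_e$ is, by Definition \ref{def}, adjacent only to vertices of $H$, and in fact to vertices of the form $h$ with $hs=x$ for some $s\in S_O$; hence $x$ lies in $\Gamma_e$ iff $x=hs$ for some $s\in S_O$ and some $h\in H$ that is itself connected to $e$, i.e.\ iff $x\in Us$ for some $s\in S_O$. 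Conversely any such $x=us$ is adjacent to $u\in U\subset\Gamma_e$. This gives $\Gamma_e=U\cup\bigcup_{s\in S_O}Us$.

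For the remaining components I would use left translation. The key point is that for any $h\in H$, left multiplication by $h$ is a graph automorphism of $\mathcal{G}(G,H,S)$: it preserves $H$ setwise, and it sends an edge $(h',h's)$ with $h'\in H$, $s\in S$ to $(hh',hh's)$, which is again an edge of the prescribed form (and symmetry of $S_H$ ensures the $S_H$-edges are respected). Consequently $h\Gamma_e$ is again a connected component, namely $\Gamma_{h}$, and as $h$ ranges over coset representatives of $U$ in $H$ these exhaust all components meeting $H$ — since every vertex of $H$ lies in $hU$ for exactly one such $h$, and $hU\subseteq h\Gamma_e$. It remains to show that a vertex $x\in G-H$ that does not lie in any $\Gamma_h$ forms a singleton component $\{x\}$: by Definition \ref{def} the only possible neighbours of $x$ are elements $h\in H$ with $hs=x$, $s\in S_O$, equivalently $x\in Hs$; if no such $s\in S_O$ exists then $x$ is isolated, and if such $s$ exists then $x=hs$ puts $x$ in $\Gamma_h$ (taking $h$ the representative of the coset $hU$), contradicting the assumption. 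Hence the components are precisely $\{\Gamma_h : h\in H/U\}$ together with the singletons $\{x\}$ for the isolated $x\in G-H$.

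The main obstacle I anticipate is bookkeeping rather than conceptual: one must check carefully that the $h\Gamma_h$ for distinct cosets $hU$ are genuinely distinct components and that no two of them coincide or overlap, and that the description $\Gamma_e=U\cup\bigcup_{s\in S_O}Us$ is closed under adjacency in both directions (in particular that no $Us$ leaks into a different component via a vertex of $H$ outside $U$ — this is exactly where $U$ being a subgroup and the definition of the outer graph $\bar{\mathcal{G}}(G,H,S_O)$ are used). I would also note explicitly that some of the "$\Gamma_h$" may collapse onto $\Gamma_e$ when $h\in U$, so the enumeration is over $H/U$, and that the singleton components are nonempty precisely when $S$ fails to contain a representative of the corresponding coset, consistent with Proposition \ref{isolated}.
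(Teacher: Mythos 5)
Your proposal is correct and follows essentially the same route as the paper's proof: the $H$-part of $\Gamma_e$ is $U$ by the connectedness argument of Lemma \ref{conectbip} and Theorem \ref{connectness}, the part of $\Gamma_e$ in $G-H$ is obtained by one step of adjacency from $U$, left translation by $h\in H$ carries paths to paths so that $\Gamma_h=h\Gamma_e$, and the remaining vertices of $G-H$ are isolated singletons. You merely make explicit the bookkeeping (enumeration of components by the cosets $hU$, and closure of $U\cup\bigcup_{s\in S_O}Us$ under adjacency) that the paper leaves implicit.
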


\begin{proof}
  The first statement follows from the preceding discussion and the definition
  of the pair-graph $\mathcal{G} ( G,H,S )$. Any path $e,g_{1} ,g_{2} , \ldots
  ,g_{n}$ from the identity $e$ to $g_{n}$ corresponds uniquely to a path $h,h
  g_{1} , \ldots ,h g_{n}$ from $h$ to $h g_{n}$ so the connected component of
  $h \in H$ is $\Gamma_{h} =h \Gamma_{e}$. For $x \in G-H$ if $x$ is an
  isolated vertex its connected component is $\{ x \}$, otherwise it is
  connected to an $h \in H$ so its connected component is of the type
  $\Gamma_{h}$.
\end{proof}

A consequence of the above proposition is that an arbitrary connected
component $\Gamma$ of $\mathcal{G} ( G,H,S )$ has cardinality equal to
$\Gamma_{e}$ or 1. Moreover, for first case, we also have $| \Gamma \cap H | =
| \Gamma_{e} \cap H |$ and $| \Gamma -H | = | \Gamma_{e} -H |$.

For Cayley graphs, the number of connected components of the graph is the
index $[ G: \langle S \rangle ]$. The existence of isolated vertices even for
non empty generating sets makes the situation for pair-graphs slightly more
complicated.

\begin{theorem}
  \label{components}The number of connected components of $\mathcal{G} ( G,H,S
  )$ is
  \[ [ H: \langle H \cap ( S_{H} \cup S_{o} S_{o}^{-1} ) \rangle ] \noplus
  \noplus +  \vert G-H \vert  -  \left\vert \bigcup_{s \in S_{O}} H s \right\vert . \]
\end{theorem}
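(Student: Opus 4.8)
The plan is to count connected components by sorting them into the three mutually exclusive types identified in Proposition~\ref{comps}: the identity component $\Gamma_e$, the translated components $\Gamma_h = h\Gamma_e$ for $h \in H$, and the singleton components $\{x\}$ for isolated vertices $x \in G-H$. First I would handle the components meeting $H$. By Proposition~\ref{comps}, every vertex of $H$ lies in some component of the form $\Gamma_h = h\Gamma_e$, and two such components $\Gamma_{h}$, $\Gamma_{h'}$ coincide exactly when $h^{-1}h' \in U := \langle H \cap (S_H \cup S_O S_O^{-1})\rangle$, since $\Gamma_h \cap H = hU$ is precisely the coset $Uh$ (using that $U$ is a subgroup of $H$, so left and right cosets by elements of $H$ partition $H$). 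Hence the number of distinct components that intersect $H$ is the number of cosets of $U$ in $H$, namely $[H:U]$. This accounts for the first term.

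Next I would count the components that do not meet $H$. By the second half of Proposition~\ref{comps}, such a component must be a singleton $\{x\}$ with $x \in G-H$, and by Proposition~\ref{isolated} (or directly from the definition of the outer graph) a vertex $x \in G-H$ is isolated if and only if $x \notin \bigcup_{s \in S_O} Hs$. Therefore the number of such singleton components equals $|G-H| - |\bigcup_{s \in S_O} Hs|$, which is exactly the remaining term in the claimed formula. It remains only to check that these two families of components are genuinely disjoint and exhaustive: a component of the form $\Gamma_h$ always contains a vertex of $H$ (namely $h$ itself), while a singleton $\{x\}$ with $x \in G-H$ never does, and Proposition~\ref{comps} guarantees there are no other components. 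Summing the two counts gives the stated total.

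The only point requiring care — and the main (mild) obstacle — is the bookkeeping that distinct cosets of $U$ in $H$ correspond to distinct components and vice versa, i.e. that the map $Uh \mapsto \Gamma_h$ is a well-defined bijection from $H/U$ onto the set of components meeting $H$. Well-definedness and injectivity follow from the identity $\Gamma_h \cap H = Uh$ together with the translation property $\Gamma_h = h\Gamma_e$ established in Proposition~\ref{comps}; surjectivity follows because every vertex of $H$ lies in exactly one component and that component has the form $\Gamma_h$. Once this correspondence is in place, the formula is just the sum $[H:U] + \bigl(|G-H| - |\bigcup_{s \in S_O} Hs|\bigr)$, and no further computation is needed.
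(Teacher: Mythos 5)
Your proposal is correct and follows essentially the same route as the paper: count the components meeting $H$ via Proposition~\ref{comps} as $[H:U]$ and add the isolated vertices of $G-H$ counted via Proposition~\ref{isolated}; you simply spell out the coset--component bijection that the paper leaves implicit. (The only quibble is the harmless conflation of $hU$ with $Uh$ — the component satisfies $\Gamma_h \cap H = hU$, a left coset, but since the number of left and right cosets of $U$ in $H$ agree, the count $[H:U]$ is unaffected.)
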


\begin{proof}
  By the preceding proposition, the first term in the formula is the number of
  connected components $\Gamma_{h}$ that occur on $H$, the second and third
  terms count the number of isolated points in $G-H$, by Proposition
  \ref{isolated}. Since there are not connections between elements of $G-H$,
  this is the number of connected components of the graph.
\end{proof}

Proposition \ref{comps} and Theorem \ref{components} completely characterize
the connected components for the pair-graphs $\mathcal{G} ( G,H,S )$ for given
group $G$, subgroup H and valid subset $S \subset G$.

\begin{example}
  For the pair-graph of example \ref{example1} we have $S= \{ 2,4,5,7,8 \}$,
  since $h=3=8-5  \in  S S^{-1}$, $H$ is generated by $S S^{-1}$ and the first
  term is 1, the second term is 8 and since all the cosets are represented the
  last term is 8, and we get 1 connected component.

  For the pair-graphs generated by $S_{1}$ and $S_{2}$ of example
  \ref{example2}, in both cases the first term is 2, the next term is $8$, and
  the final term is $4$ for the graph generated by $S_{1}$ and $8$ for the
  graph generated by $S_{2}$, therefore $\mathcal{G} ( G,H,S_{1} )$ has 6
  connected components and $\mathcal{G} ( G,H,S_{2} )$ has 2 connected
  components as confirmed visually in the diagrams.
\end{example}

\subsection{\label{groupactions}Vertex transitivity and group actions}

A graph is \tmtextit{vertex transitive} when for any pair of different
vertices $x$ and $y$ there is graph automorphism $\varphi$ such that $\varphi
( x ) =y$. Cayley graphs are naturally vertex transitive by means of left
translations $L_{g}$ with $g \in G$. Any vertex transitive graph must be
regular, therefore by Proposition \ref{coset}, we have the following result.

\begin{proposition}
  Nontrivial pair-graphs $\mathcal{G} ( G,H,S )$ are not vertex transitive
  when $[ G:H ] \geqslant 3$.
\end{proposition}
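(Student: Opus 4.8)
The plan is to show that a nontrivial pair-graph with $[G:H]\geqslant 3$ fails to be regular, and then invoke the standard fact that every vertex-transitive graph is regular. The key tool is the degree computation of Proposition \ref{coset}: the vertices in $H$ have degree $|S|$, while the vertices in a coset $Hx$ with $x\nin H$ have degree $|S\cap Hx|$. So first I would argue that if the graph is nontrivial then $S\neq\emptyset$, and I must produce two vertices of different degree. If $S_H\neq\emptyset$, then by the Corollary (the strict inequality $|S|\geqslant|S_O|$ with equality only when $S_H=\emptyset$) we have $\deg(h)=|S|>|S_O|=\sum_i\deg(Hx_i)\geqslant\deg(Hx_i)$ for any coset representative $x_i$ — one only needs that there is at least one nontrivial coset, which holds since $[G:H]\geqslant 2$ — so the graph is not regular and hence not vertex transitive.

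The remaining case is $S_H=\emptyset$, so $S=S_O$ and $\deg(h)=|S|=\sum_{i=1}^{k}|S\cap Hx_i|$ where $k=[G:H]-1\geqslant 2$. Here I would split once more. If some nontrivial coset $Hx_i$ has $|S\cap Hx_i|=0$, then that coset consists of isolated vertices of degree $0$, while the vertices of $H$ have degree $|S|$; since the graph is nontrivial $|S|\geqslant 1$, so again we have two vertices of different degree. Otherwise every one of the $k\geqslant 2$ nontrivial cosets meets $S$, so $|S|=\sum_{i=1}^k|S\cap Hx_i|\geqslant|S\cap Hx_1|+|S\cap Hx_2|>|S\cap Hx_1|$ (both summands being positive), and once more the vertices of $H$ have strictly larger degree than those of $Hx_1$. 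In every case the graph has vertices of two distinct degrees, so it is not regular; since vertex-transitive graphs are regular, nontrivial pair-graphs with $[G:H]\geqslant 3$ are not vertex transitive.

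I do not expect any genuine obstacle here: the whole statement is essentially a corollary of the degree formula in Proposition \ref{coset} together with the inequality already recorded in the Corollary, and the only care needed is bookkeeping over the cases $S_H=\emptyset$ versus $S_H\neq\emptyset$ and whether the nontrivial cosets are hit by $S$ (so that one correctly locates two vertices whose degrees differ). If one is willing to cite the Corollary directly, the cleanest presentation is: nontriviality gives $S\neq\emptyset$; the Corollary shows a nontrivial $\mathcal{G}(G,H,S)$ is regular only if $[G:H]\leqslant 2$ (or $S_H=\emptyset$ with $[G:H]=2$); since $[G:H]\geqslant 3$ the graph is not regular, hence not vertex transitive. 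The mild subtlety worth a sentence is the degenerate-looking possibility that two non-$H$ cosets could "absorb" all of $S$ with $H$ still having the larger degree — but that is exactly what the strict inequality $\deg(H)>\sum_i\deg(Hx_i)$ (or, when $S_H=\emptyset$, the fact that $k\geqslant 2$ forces $|S|$ strictly above each individual $|S\cap Hx_i|$ unless all but one coset is empty, which reintroduces isolated vertices) rules out.
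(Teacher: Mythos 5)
Your proposal is correct and takes essentially the same route as the paper: deduce non-regularity of a nontrivial pair-graph with $[G:H]\geqslant 3$ from the coset degree formula (as already packaged in the Corollary characterizing when $\mathcal{G}(G,H,S)$ is regular), then invoke the standard fact that vertex-transitive graphs are regular. Your extra case analysis on $S_H$ and on which cosets meet $S$ is sound but redundant once that Corollary is cited, which is exactly how the paper argues.
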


However, we still see restricted transitivity when considering left
translations by elements of $H$ on $\mathcal{G} ( G,H,S )$.

\begin{proposition}
  \label{leftaction}The left action of H in $\mathcal{G} ( G,H,S )$ is a graph
  automorphism. Moreover, for any $h_{1} ,h_{2} \in H$, the action is
  transitive. The same holds for any coset $H$x, $x \in G$.
\end{proposition}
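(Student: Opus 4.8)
The plan is to verify directly that for each fixed $h \in H$, the left translation map $L_h : G \to G$, $L_h(x) = hx$, is an automorphism of $\mathcal{G}(G,H,S)$, and then deduce transitivity. First I would check that $L_h$ is a bijection on the vertex set $G$ (it is, being left multiplication in a group, with inverse $L_{h^{-1}}$), and that it preserves the subgroup $H$ setwise, i.e. $L_h(H) = H$, since $h \in H$. This last point is the reason we restrict to $h \in H$ rather than arbitrary $g \in G$: only elements of $H$ fix the set $H$, and the edge relation of the pair-graph distinguishes $H$ from its complement.

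Next I would check that $L_h$ maps edges to edges. Using the description in Definition~\ref{def}, an edge is either of the form $(h', h's)$ with $h' \in H$, $s \in S \cap H = S_H$, or a pair $(h', h's),(h's,h')$ with $h' \in H$, $s \in S_O = S - H$. Applying $L_h$ to such an edge gives $(hh', hh's)$; since $hh' \in H$ whenever $h' \in H$, and $s$ is unchanged, the image is again an edge of the same type. Conversely $L_{h^{-1}}$ does the same, so $L_h$ is a graph automorphism. Since the map $h \mapsto L_h$ is a homomorphism from $H$ into $\operatorname{Aut}(\mathcal{G}(G,H,S))$, and for any $h_1, h_2 \in H$ the automorphism $L_{h_2 h_1^{-1}}$ sends $h_1$ to $h_2$, the action of $H$ on the vertex subset $H$ is transitive.

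For the final claim about an arbitrary coset $Hx$, I would observe that $L_h$ also permutes $Hx$ setwise for every $h \in H$ and every $x \in G$, since $h(Hx) = (hH)x = Hx$. Thus for $hx_1, hx_2 \in Hx$ — equivalently, writing a general pair of elements of $Hx$ as $h_1 x$ and $h_2 x$ with $h_1, h_2 \in H$ — the automorphism $L_{h_2 h_1^{-1}}$ carries $h_1 x$ to $h_2 x$, giving transitivity of the $H$-action restricted to the coset $Hx$. I do not anticipate a serious obstacle here; the only point requiring care is bookkeeping the two edge-types of the pair-graph and confirming that membership in $H$ versus $G - H$ is preserved — which follows immediately because left multiplication by an element of $H$ stabilizes both $H$ and each coset $Hx$.
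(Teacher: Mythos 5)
Your proof is correct and follows essentially the same route as the paper: verify that left translation $L_h$ for $h \in H$ sends each edge $(h', h's)$ to the edge $(hh', hh's)$ of the same type, is invertible via $L_{h^{-1}}$, and then use $L_{h_2 h_1^{-1}}$ for transitivity on $H$ and on each coset $Hx$. (Incidentally, your translation element $h_2 h_1^{-1}$ is the correct one; the paper's printed $h' = h_1 h_2^{-1}$ is a typo.)
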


\begin{proof}
  For $h \in H$ and $s \in S$, \ the edge $( h,h s )$ is in $\mathcal{G} (
  G,H,S )$. Now, for $h' \in H$, the images of $h$, and $h s$ under the left
  action $L_{h'}$ are $h' h$ and $h' h s$, which are adjacent in $\mathcal{G}
  ( G,H,S )$. The map is a bijection, and has inverse $L_{h'^{-1}}$ so it is a
  graph automorphism. For $h_{1} ,h_{2} \in H$, consider $h' =h_{1}
  h_{2}^{-1}$, then $L_{h'} ( h_{1} ) =h_{2}$ as required. For $x_{1} ,x_{2}
  \in H x $, take $h' \in H$ such that $L_{h'} ( x_{1} ) =x_{2}$.
\end{proof}

Note that the left action of arbitrary $g \in G$ is not necessarily a graph
automorphism. For instance, in example \ref{graph1} the action of $g=1$ is not
a graph endomorphism, as the image of $0$ is $1$, and $\tmop{degree} (0)=5$,
but $\tmop{degree} ( 1 ) =2$.

Let $\mathcal{L} ( G )$ be the set of complex valued functions defined on $G$,
and consider $\lambda_{H} :H \rightarrow \tmop{GL} ( \mathcal{L} ( G ) )$ the
left regular representation of $H$ given by the action $\lambda_{H} ( h ) f (
x ) =f ( h^{-1} x )$ for h$\in H$ and $x \in G$. Recall that for any graph
with vertices on $G$ we can associate the adjacency operator $A$, acting on
$\mathcal{L} ( G )$ in the following way,
\[ A f ( x )  =  \sum_{x \sim y} f ( y ) . \]
\begin{proposition}
  \label{leftreg}Let $A$ be the adjacency matrix for a group-subgroup pair
  graph $\mathcal{G} ( G,H,S )$, then for any $h \in H$
  \[ \lambda_{H} ( h ) A=A \lambda_{H} ( h ) . \]
\end{proposition}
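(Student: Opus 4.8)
The plan is to show that $A$ commutes with each $\lambda_H(h)$ by unwinding both sides as operators on $\mathcal{L}(G)$ and checking equality of the resulting sums, using only the fact that the edge set of $\mathcal{G}(G,H,S)$ is invariant under the left action of $H$, which is exactly the content of Proposition \ref{leftaction}. First I would fix $h\in H$ and $f\in\mathcal{L}(G)$ and compute, for arbitrary $x\in G$,
\[
\bigl(\lambda_H(h)Af\bigr)(x)=\bigl(Af\bigr)(h^{-1}x)=\sum_{y\,:\,h^{-1}x\sim y}f(y),
\]
where $\sim$ denotes adjacency in $\mathcal{G}(G,H,S)$. On the other side,
\[
\bigl(A\lambda_H(h)f\bigr)(x)=\sum_{z\,:\,x\sim z}\bigl(\lambda_H(h)f\bigr)(z)=\sum_{z\,:\,x\sim z}f(h^{-1}z).
\]

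The key step is then to reindex the second sum via $y=h^{-1}z$, so that it becomes $\sum_{y\,:\,x\sim hy}f(y)$, and to observe that $x\sim hy$ if and only if $h^{-1}x\sim y$. This last equivalence is precisely the statement that left multiplication by $h^{-1}$ (equivalently by $h$) is a graph automorphism of $\mathcal{G}(G,H,S)$, established in Proposition \ref{leftaction}; I would cite it directly rather than re-verify the edge conditions of Definition \ref{def}. Once this equivalence is in hand, the two sums have identical index sets and identical summands, so the two sides agree for every $x$, and since $f$ was arbitrary we conclude $\lambda_H(h)A=A\lambda_H(h)$.

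I do not expect a serious obstacle here: the only subtlety is being careful that $L_h$ being an automorphism really does give the biconditional $x\sim hy \iff h^{-1}x\sim y$ — i.e. that automorphism invariance of the edge set is used in both directions — and this is immediate because graph automorphisms preserve adjacency and non-adjacency. One might alternatively phrase the whole argument at the level of $0$–$1$ matrices: writing $P_h$ for the permutation matrix of $L_h$ on $\mathcal{L}(G)$, the automorphism property says $P_h A P_h^{-1}=A$, and since $\lambda_H(h)$ acts on $\mathcal{L}(G)$ exactly as $P_h$, this is the claimed identity. I would probably present the functional computation above as the main argument since it keeps the role of Proposition \ref{leftaction} transparent, and mention the matrix reformulation in one line.
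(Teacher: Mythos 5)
Your argument is correct, but it takes a slightly different route from the paper's. The paper proves the identity by a direct calculation: it writes the adjacency operator explicitly in its two-case form, $Af(y)=\sum_{s\in S}f(ys)$ for $y\in H$ and $Af(y)=\sum_{s\in S\cap Hy}f(ys^{-1})$ for $y\in G-H$, and observes that $x$ and $h^{-1}x$ always lie in the same coset, so both sides of the commutation relation land in the same branch of the formula and agree term by term. You instead take Proposition \ref{leftaction} (left translation by $h$ is a graph automorphism) as the key input and deduce the commutation from the equivalence $x\sim hy\iff h^{-1}x\sim y$, never touching the explicit formula for $A$. Both are sound; your version is shorter and makes the logical dependence on \ref{leftaction} explicit, while the paper's computation is self-contained and is what lets the author then remark (via Theorem 15.2 of \cite{Biggs1996}) that the proposition is \emph{equivalent} to \ref{leftaction} rather than merely a consequence of it. In effect you are running that closing remark of the paper in one direction as the proof itself, which is legitimate; the only thing to keep in mind is that deriving the proposition from \ref{leftaction} and then citing it as an independent ``representation-theoretic restatement'' would be circular if one wanted the equivalence, but for the statement as posed your derivation is complete.
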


\begin{proof}
  Note that for the pair-graph $\mathcal{G} ( G,H,S )$ and $f \in \mathfrak{L}
  ( G )$, the adjacency operator is given by

  \[
  A f(y) =
  \begin{cases}
    \sum_{s \in S} f ( y s )  & \text{if }  y \in H  \\
    \sum_{s \in S \cap H y} f ( y s^{-1} )      & \text{if } y \in G-H
  \end{cases}
  \]

  The result then follows from direct calculation by considering that for any
  $x \in G$, $x$ and $h^{-1} x$ are in the same coset, in other words, $H x=H
  h^{-1} x$ for any $h \in H$.
\end{proof}

For $h \in H$, $\lambda_{H} ( h )$ is a permutation matrix for the elements of
$G$ corresponding to the left multiplication by $h$. By Theorem 15.2 of
{\cite{Biggs1996}}, a bijection $\varphi$ of the vertices of a graph is a
graph automorphism if $M_{\varphi} A=A M_{\varphi}$, where $M_{\varphi}$ is
the permutation matrix associated with $\varphi$; therefore Proposition
\ref{leftreg} is only a representation theoretic restatement of Proposition
\ref{leftaction}.

\subsection{The trivial eigenvalues of pair-graphs $\mathcal{G} ( G,H,S )$}

The trivial eigenvalue of a $k$-regular graph is $\mu =k$ and it corresponds
to any constant eigenfunction $f$ on the vertices of the graph. By extension,
the trivial eigenvalue of a Cayley graph $\mathcal{G} ( G,S )$ is $\mu =k$. In
this section we extend this notion to the group-subgroup pair graphs.

\begin{theorem}
  \label{trivialeigen}Let $G$ be a group, $H$ a subgroup of $G$ of index $[
  G:H ] =k+1$ with $k \geqslant 1$, and \ $S \subset G$ a nonempty subset with
  $S_{H}$ symmetric and $| S_{O} | \neq 0$. Consider $e=x_{0} ,x_{1} , \ldots
  ,x_{k}$ a set of representatives of the cosets of $H$ in $G$ and set $S_{i}
  =S \cap H x_{i}$ , for $i \in [ k ]$. \ Then
  \[ \mu^{\pm} = \frac{| S_{H} | \pm \sqrt{| S_{H} |^{2} +4 \left(
        \sum_{1}^{k} | S_{i} |^{2}   \right)}}{2} \]
  are eigenvalues of the graph $\mathcal{G} ( G,H,S )$. A corresponding
  eigenfunction is defined by

  \[
  f^{\pm}(y) =
  \begin{cases}
    \mu^{\pm}, & \text{if } y \in H \\
    | S_{i} | & \text{if } y \in H y_{i},\hspace{2mm} i \in [ k ] .
  \end{cases}
  \]

\end{theorem}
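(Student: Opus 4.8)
The plan is to verify directly that the stated $f^{\pm}$ is an eigenfunction of the adjacency operator $A$ with eigenvalue $\mu^{\pm}$, using the explicit description of $A$ recorded in the proof of Proposition \ref{leftreg}: for $y \in H$ one has $Af(y) = \sum_{s \in S} f(ys)$, and for $y \in G-H$ one has $Af(y) = \sum_{s \in S \cap Hy} f(ys^{-1})$. Since $f^{\pm}$ is constant on each coset of $H$, checking $Af^{\pm} = \mu^{\pm} f^{\pm}$ splits into just two cases according to whether the vertex lies in $H$ or in $G-H$.

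First I would treat a vertex $y \in H$. Writing $S = S_H \sqcup S_1 \sqcup \cdots \sqcup S_k$ with $S_i = S \cap Hx_i$, note that $s \in S_H$ gives $ys \in H$, so $f^{\pm}(ys) = \mu^{\pm}$, while $s \in S_i$ gives $Hys = Hs = Hx_i$, so $f^{\pm}(ys) = |S_i|$. Hence $Af^{\pm}(y) = |S_H|\,\mu^{\pm} + \sum_{i=1}^k |S_i|^2$, and the required identity $Af^{\pm}(y) = \mu^{\pm} f^{\pm}(y) = (\mu^{\pm})^2$ is exactly the statement that $\mu^{\pm}$ is a root of the quadratic $t^2 - |S_H|\, t - \sum_{i=1}^k |S_i|^2 = 0$, which by the quadratic formula is precisely the given expression for $\mu^{\pm}$.

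Next, for a vertex $y \in Hx_j$ with $j \in [k]$, I would observe that $S \cap Hy = S \cap Hx_j = S_j$, and that for any $s \in S_j$ we have $Hs = Hx_j = Hy$, hence $ys^{-1} \in H$ and $f^{\pm}(ys^{-1}) = \mu^{\pm}$. Therefore $Af^{\pm}(y) = |S_j|\,\mu^{\pm} = \mu^{\pm} f^{\pm}(y)$, so the eigenvalue equation holds automatically on all of $G-H$ with no extra condition. Combining the two cases gives $Af^{\pm} = \mu^{\pm} f^{\pm}$.

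Finally I would record that $f^{\pm}$ really is an eigenfunction — that is, not identically zero — and that $\mu^+ \neq \mu^-$ so that two eigenvalues are obtained: since $|S_O| \neq 0$, some $|S_j| \geq 1$, so $f^{\pm}$ is nonzero on $Hx_j$, and the discriminant $|S_H|^2 + 4\sum_{i=1}^k |S_i|^2 \geq 4 > 0$, so $\mu^+$ and $\mu^-$ are distinct real numbers. There is no substantial obstacle: the argument is a direct computation, and the only point needing a little care is the coset bookkeeping in the outer part of $A$ — in particular, that replacing $s$ by $s^{-1}$ always lands back in $H$ — together with recognizing that the relation obtained in the $y \in H$ case is exactly the defining quadratic of $\mu^{\pm}$.
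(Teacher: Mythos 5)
Your proof is correct and takes essentially the same approach as the paper: a direct computation of $Af^{\pm}$ on the two types of vertices, using the decomposition $S=S_H\sqcup S_1\sqcup\cdots\sqcup S_k$ and the fact that $\mu^{\pm}$ is a root of $t^{2}-|S_H|t-\sum_{i=1}^{k}|S_i|^{2}=0$. Your closing check that $f^{\pm}\not\equiv 0$ and $\mu^{+}\neq\mu^{-}$ when $|S_O|\neq 0$ is a worthwhile addition that the paper only addresses in the remark following the theorem.
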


\begin{proof}
  Note that the numbers $\mu^{\pm}$ satisfy
  \[ ( \mu^{\pm} )^{2} - | S_{H} | \mu^{\pm} - \sum_{1}^{k} | S_{i} |^{2}
  =0. \]
  Then, for $h \in H$, we have
  \begin{align*}
    A f^{\pm} ( h ) = & \sum_{s \in S} f^{\pm} ( h s )\\
    = & \sum_{s \in S_{H}} f^{\pm} ( h s )  + \sum_{i=1}^{k} \sum_{s \in
      S_{i}} f^{\pm} ( h s )\\
    = & | S_{H} | \mu^{\pm} + \sum_{i=1}^{k} | S_{i} |^{2}  =  ( \mu^{\pm})^{2}\\
    = & \mu^{\pm} f ( h ) .
  \end{align*}
  Similarly, for $x \in H x_{i}$, $i=1, \ldots ,k$,
  \begin{align*}
    A f^{\pm} ( x ) = & \sum_{s \in S_{i}} f ( x s^{-1} )\\
    =  & \mu^{\pm} | S_{i} |  \\
    =  & \mu^{\pm} f^{\pm} ( x ) . \qedhere
  \end{align*}
\end{proof}

Note that for the case $| S_{O} | =0$, $\mu^{+}$ is an eigenvalue with
corresponding eigenvector $f^{+}$ as defined in the above Theorem, but $f^{-}
\equiv 0$ so it is not an eigenfunction.

\begin{proposition}
  With the same notation as before, the eigenvalue $\mu^{+}$ is the largest
  eigenvalue of the graph $\mathcal{G} ( G,H,S )$ with multiplicity $[ H,
  \langle H \cap ( S_{H} \cup S_{O} S_{O}^{-1} ) \rangle ]$.
\end{proposition}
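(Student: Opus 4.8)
The plan is to establish two separate facts: that $\mu^{+}$ is the largest eigenvalue, and that its multiplicity equals $[H:\langle H\cap(S_{H}\cup S_{O}S_{O}^{-1})\rangle]$. For the first claim, I would argue componentwise. By Proposition \ref{comps} the graph decomposes into connected components, each of which is either an isolated vertex $\{x\}$ with $x\in G-H$ (contributing only the eigenvalue $0\leqslant\mu^{+}$) or a translate $\Gamma_{h}=h\Gamma_{e}$ of the identity component. Since left translation by $h\in H$ is a graph automorphism (Proposition \ref{leftaction}), all the non-isolated components are isomorphic, so it suffices to bound the spectral radius of the adjacency operator restricted to $\Gamma_{e}$. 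On that component the graph is connected, so by Perron--Frobenius its adjacency matrix has a unique largest eigenvalue with a strictly positive eigenfunction; since the restriction of $f^{+}$ to $\Gamma_{e}$ is strictly positive (all values are $\mu^{+}>0$ on $H\cap\Gamma_{e}$ and $|S_{i}|>0$ on the cosets actually met by $\Gamma_{e}$, recalling $U s$ appears precisely for $s\in S_{O}$) and is an eigenfunction with eigenvalue $\mu^{+}$ by Theorem \ref{trivialeigen}, Perron--Frobenius forces $\mu^{+}$ to be exactly the spectral radius of that component. Hence $\mu^{+}$ is the largest eigenvalue of the whole graph.

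For the multiplicity, the key observation is again the component decomposition: an eigenvalue equal to the Perron value of $\Gamma_{e}$ can only be supported on the non-isolated components (an isolated vertex contributes a one-dimensional eigenspace at $0$, and $0<\mu^{+}$ since $|S_{O}|\neq 0$ forces $\sum|S_{i}|^{2}>0$). On each non-isolated component the Perron eigenvalue $\mu^{+}$ is simple, again by Perron--Frobenius applied to the connected component. Therefore the multiplicity of $\mu^{+}$ in the full spectrum equals the number of non-isolated connected components, each of which is of the form $\Gamma_{h}=h\Gamma_{e}$ for $h\in H$. By Proposition \ref{comps} and Theorem \ref{components}, the number of such components is exactly $[H:U]$ where $U=\langle H\cap(S_{H}\cup S_{O}S_{O}^{-1})\rangle$: these are the distinct cosets $hU$, since $\Gamma_{h}=\Gamma_{h'}$ if and only if $h^{-1}h'\in U$. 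This gives the stated multiplicity $[H:\langle H\cap(S_{H}\cup S_{O}S_{O}^{-1})\rangle]$.

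The main obstacle I anticipate is the Perron--Frobenius step on a single component, specifically checking irreducibility carefully and verifying that the restriction of $f^{+}$ really is strictly positive on every vertex of $\Gamma_{e}$ (not merely nonnegative). One must confirm that $\Gamma_{e}$ meets a coset $Hx_{i}$ only through vertices in $Us$ for $s\in S_{O}\cap Hx_{i}$, and that on those vertices $f^{+}$ takes the value $|S_{i}|$, which is nonzero precisely because that coset is represented in $S_{O}$ — so the positivity is automatic on exactly the vertices that lie in $\Gamma_{e}$. A secondary subtlety is that the statement implicitly treats the multiplicity as counting all non-isolated components uniformly; one should note that if $\Gamma_{e}$ happens to contain vertices of several cosets, the count $[H:U]$ still correctly enumerates the components via the coset structure of $U$ in $H$, which is exactly what Proposition \ref{comps} provides. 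Once these positivity and irreducibility points are nailed down, the rest is a direct assembly of Theorem \ref{trivialeigen}, Proposition \ref{comps}, and the classical Perron--Frobenius theorem.
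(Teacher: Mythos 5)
Your proposal is correct and follows essentially the same route as the paper: decompose $\mathcal{G}(G,H,S)$ into connected components, apply Perron--Frobenius (the paper cites the equivalent statement, Thm.~8.1.4 and Cor.~8.1.5 of \cite{Knauer2011}, that a nowhere-zero eigenfunction of constant sign on a connected graph belongs to the simple largest eigenvalue) to the positive eigenfunction $f^{+}$ on each component $\Gamma_{h}$, and count the components meeting $H$ via Proposition \ref{comps} and Theorem \ref{components}. Your explicit verification that $f^{+}$ is strictly positive on exactly the vertices of $\Gamma_{e}$ is a worthwhile refinement of a point the paper passes over quickly, but it does not change the argument.
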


\begin{proof}
  First, consider the case of a connected pair-graph $\mathcal{G} ( G,H,S )$,
  in particular $| S_{i} | \neq 0$ for all $i \in [ k ]$ and the eigenfunction
  $f^{+}$ takes only positive values. By Thm. 8.1.4 and Cor. 8.1.5 of
  {\cite{Knauer2011}}, any eigenfunction that only takes nonzero values of the
  same sign corresponds to the largest eigenvalue, which has multiplicity 1.
  The proposed eigenfunction satisfies this condition, therefore corresponds
  to the largest eigenvalue and it is an eigenvalue of multiplicity one, so
  the statement of the proposition follows.

  For the remaining case, let $h \in H$ and consider the connected component
  $\Gamma_{h}$ as a subgraph of $\mathcal{G} ( G,H,S )$ and note that $f^{+}
  |_{\Gamma_{h}}$ is an eigenfunction of $\Gamma_{h}$ with eigenvalue
  $\mu^{+}$. Then by \ the same argument as before, $\mu^{+}$ is the largest
  eigenvalue of $\Gamma_{h}$ with multiplicity 1. Now, it is well known that
  the characteristic polynomial $p_{\mathcal{G}} ( x )$ of the graph
  $\mathcal{G}=\mathcal{G} ( G,H,S )$ is the product of the characteristic
  polynomials of its connected components,
  \[ p_{\mathcal{G}} ( x ) =p_{\Gamma_{h_{1}}} ( x ) p_{\Gamma_{h_{2}}} ( x )
  \ldots p_{\Gamma_{h_{r}}} ( x ) x^{l} , \]
  where $r$ is the number of connected components of $\mathcal{G} ( G,H,S )$
  containing elements of $H$ and $l$ the number of isolated vertices.
  Moreover, since $\mu^{\pm}$ is the largest root of $p_{\Gamma_{h_{i}}} ( x
  )$ for each $h_{i} \in H$, then is the largest root of $p_{\mathcal{G}} ( x
  )$ and therefore the largest eigenvalue of $\mathcal{G} ( G,H,S )$.
  Furthermore, since $\mu^{+}$ is a simple eigenvalue for each of the
  subgraphs $\Gamma_{h_{i}}$ , then by Theorem \ref{components} $\mu^{+}$ is
  an eigenvalue of $\mathcal{G} ( G,H,S )$ with multiplicity equal to $[ H,
  \langle H \cap ( S_{H} \cup S_{O} S_{O}^{-1} ) \rangle ]$.
\end{proof}

From the decomposition of the characteristic polynomial of the adjacency
matrix, we can obtain a bound for the multiplicity of the eigenvalue \(\mu = 0\).

\begin{proposition}
  With the same notation as before, $\mu =0$ is an eigenvalue of multiplicity at least
  \[ |G| - |H| - \min(\left\vert \bigcup_{s \in S_{O}} H s \right\vert,|H|) \]
  of the nontrivial pair-graph $\mathcal{G} ( G,H,S )$.
\end{proposition}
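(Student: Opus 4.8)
The plan is to exploit the block structure of the adjacency matrix $A$ of $\mathcal{G}(G,H,S)$ relative to the partition of $G$ into the subgroup $H$ and its nontrivial cosets, and then count the rank. By Proposition \ref{coset} and the definition of the outer graph, a vertex $x \in G-H$ is adjacent only to vertices of $H$; thus, writing the rows and columns in the order ``$H$ first, then $G-H$'', the matrix $A$ has the block form $\left(\begin{smallmatrix} B & C \\ C^{T} & 0 \end{smallmatrix}\right)$, where $B$ is the $|H|\times|H|$ adjacency matrix of the inner Cayley graph $\mathcal{G}(H,S_{H})$ and $C$ is $|H|\times(|G|-|H|)$. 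The number of columns of $C$ that are identically zero is exactly the number of isolated vertices in $G-H$, namely $|G-H| - \bigl|\bigcup_{s\in S_{O}} Hs\bigr|$, by Proposition \ref{isolated}; after deleting those zero columns (and the corresponding zero rows of $C^{T}$) we are left with a matrix whose bottom-right block is still zero and whose ``$C$-part'' now has $\bigl|\bigcup_{s\in S_{O}} Hs\bigr|$ columns.

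First I would bound the rank of $A$ from above. The rank of a block matrix $\left(\begin{smallmatrix} B & C \\ C^{T} & 0 \end{smallmatrix}\right)$ is at most $\operatorname{rank}\left(\begin{smallmatrix} B \\ C^{T}\end{smallmatrix}\right) + \operatorname{rank}\left(\begin{smallmatrix} C \\ 0 \end{smallmatrix}\right)$; the first summand is at most $|H|$ (it has $|H|$ columns) and the second is at most $\operatorname{rank}(C) \le \min\bigl(|H|, \bigl|\bigcup_{s\in S_{O}} Hs\bigr|\bigr)$. Hence
\[
  \operatorname{rank}(A) \le |H| + \min\Bigl(\bigl|\textstyle\bigcup_{s\in S_{O}} Hs\bigr|,\,|H|\Bigr),
\]
and since $A$ is a symmetric matrix of size $|G|$, the multiplicity of the eigenvalue $0$ is $|G| - \operatorname{rank}(A)$, which is at least the claimed quantity $|G| - |H| - \min\bigl(\bigl|\bigcup_{s\in S_{O}} Hs\bigr|,\,|H|\bigr)$.

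An alternative route, which also explains the phrasing of the statement as following ``from the decomposition of the characteristic polynomial,'' is to use the factorization $p_{\mathcal{G}}(x) = p_{\Gamma_{h_{1}}}(x)\cdots p_{\Gamma_{h_{r}}}(x)\, x^{l}$ from the previous proof, where $l$ is the number of isolated vertices. Each component $\Gamma_{h_{i}}$ is isomorphic (via Proposition \ref{comps}) to $\Gamma_{e}$, whose vertex set is $U \cup \bigcup_{s\in S_{O}} Us$ with $U = \langle H\cap(S_{H}\cup S_{O}S_{O}^{-1})\rangle$; restricting the rank bound above to a single component $\Gamma_{e}$ shows that $A|_{\Gamma_{e}}$ has corank at least $|\Gamma_{e}| - |U| - \min(|\Gamma_{e}\cap(G-H)|, |U|)$, and summing over the $r$ components together with the $l$ isolated vertices recovers the global bound after using the identities $|\Gamma\cap H| = |\Gamma_{e}\cap H|$ noted after Proposition \ref{comps}. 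I would present the first, direct block-rank argument as the main proof, since it is cleaner. The only subtlety — and the one place to be careful — is the inequality $\operatorname{rank}(C) \le \min(|H|, |\bigcup_{s\in S_{O}} Hs|)$ together with keeping track of which zero rows/columns have been removed; everything else is a routine rank count, and the ``at least'' in the statement means we need no lower bound on $\operatorname{rank}(C)$, so no delicate nonvanishing argument is required.
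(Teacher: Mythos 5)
Your proof is correct, and it reorganizes the paper's argument in a cleaner way. The key structural fact --- that $G-H$ is an independent set in $\mathcal{G}(G,H,S)$, so the adjacency matrix has a zero block in the $(G-H)\times(G-H)$ position --- is the same one the paper relies on, but the counting is packaged differently. The paper splits into two cases: it first reads off the isolated vertices from the factorization of the characteristic polynomial, contributing $|G|-|H|-\bigl|\bigcup_{s\in S_O}Hs\bigr|$ zero eigenvalues, and then separately considers eigenfunctions vanishing on $H$, which must satisfy the $|H|$ linear equations $\sum_{s\in S_O}f(hs)=0$ and therefore form a space of dimension at least $|G|-2|H|$; the stated bound is the maximum of these two estimates, which is exactly how the $\min$ arises. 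Your single block-rank estimate $\operatorname{rank}(A)\le |H|+\operatorname{rank}(C)$, together with $\operatorname{rank}(C)\le\min\bigl(|H|,\bigl|\bigcup_{s\in S_O}Hs\bigr|\bigr)$ (number of rows versus number of nonzero columns of $C$), absorbs both cases at once; combined with the symmetry of $A$, so that the corank equals the multiplicity of the eigenvalue $0$, it yields the claimed bound without any case analysis. In effect your argument is the paper's second case carried out with a sharper bound on the rank of the off-diagonal block, a genuine if modest streamlining. One cosmetic caveat, which applies equally to the paper's proof: when the claimed quantity is nonpositive the statement is vacuous, and the argument only certifies that $0$ actually occurs as an eigenvalue when the bound is positive.
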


\begin{proof}
  First, suppose that the pair-graph $\mathcal{G} ( G,H,S )$ contains isolated vertices, then
  from the decomposition of the characteristic polynomial of the adjacency
  matrix we have that $\mu = 0$ is an eigenvalue with multiplicity at least the number of
  isolated components, by {\ref{components}} this number is \( |G| - |H| - \left\vert \bigcup_{s \in S_{O}} H s \right\vert \). For a general pair-graph $\mathcal{G} ( G,H,S )$, consider an eigenfunction $f$ associated with the
  eigenvalue $\mu=0$ with $f(h)=0$ for $h \in H$, then $f$ must satisfy the \( |H| \) linear equations
  \[
      \sum_{s \in S_O} f(h s) = 0.
  \]
  The matrix \(B \) corresponding to this system is a \(|H| \times |G-H| \) matrix, then from
  elementary linear algebra it holds that \( |H| \) is an upper bound for the rank of \(B \). Therefore
  the kernel of \( B \) has dimension at least $|G|-2|H|$, so the multiplicity of the eigenvalue
  \(\mu = 0\) is at least $|G|-2|H|$. The result follows from considering the two cases at
  the same time.

\end{proof}

\begin{example}
  The pair-graph in example \ref{finitefield}, has $| S_{O} | = | S | =16$,
  with four cosents of degree 2 and two cosets of degree 4, therefore the
  trivial eigenvalues are $\pm 4 \sqrt{3}$. The pair-graph in example
  \ref{random} has $| S_{O} | = | S | =7$, with two cosets of degree 2 and one
  coset of degree 3, then $\pm \sqrt{17}$ are the corresponding trivial
  eigenvalues. The pair-graph in example \ref{bipartite} has $| S_{H} | =2$,
  $| S_{1} | = | S_{2} | =2$, therefore the trivial eigenvalues are $\mu^{+}
  =4$ and $\mu^{-} =-2$.
\end{example}

\subsection{Bipartite pair-graphs $\mathcal{G} ( G,H,S )$ }

A graph is \tmtextit{bipartite} when there is a bipartition $V_{+} ,V_{-}$ of
the vertices such that any pair of vertices in the same subset are not
adjacent. For a group $G$ and symmetric subset $S$, if there is an
homomorphism $\chi :G \rightarrow \{ -1,1 \}$, such that $\chi ( S ) = \{ -1
\}$ then the Cayley graph $\mathcal{G} ( G,S )$ is bipartite, this condition
is also necessary when the graph is connected.

\begin{proposition}
  If a group homomorphism $\chi :G \rightarrow \{ -1,1 \}$, such that
  $\chi ( S ) = \{ -1 \}$ exists, then the graph $\mathcal{G} ( G,H,S )$ is
  bipartite. The converse is true if $\mathcal{G} ( G,H,S )$ is connected and
  $S$ is symmetric.
\end{proposition}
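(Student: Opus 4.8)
The plan is to treat the two directions separately, mirroring the Cayley-graph statement that precedes it.

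\emph{Sufficiency.} Suppose $\chi\colon G\to\{-1,1\}$ is a homomorphism with $\chi(S)=\{-1\}$ (so $S\neq\emptyset$). Set $V_{+}=\chi^{-1}(1)$ and $V_{-}=\chi^{-1}(-1)$. By Definition \ref{def} every edge of $\mathcal{G}(G,H,S)$ is of the form $\{h,hs\}$ with $h\in H$ and $s\in S$ (this covers the $S_O$-edges $(h,hs),(hs,h)$ and the $S_H$-edges $(h,hs)$ alike), and $\chi(hs)=\chi(h)\chi(s)=-\chi(h)$, so its endpoints lie in different blocks. Hence $(V_{+},V_{-})$ is a bipartition and the graph is bipartite.

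\emph{Necessity.} Assume $\mathcal{G}(G,H,S)$ is connected and bipartite and $S$ is symmetric. A connected bipartite graph has a unique bipartition; call it $(V_{+},V_{-})$ with $e\in V_{+}$, and define $\chi(g)=1$ for $g\in V_{+}$ and $\chi(g)=-1$ otherwise. By connectedness $\chi(g)=(-1)^{\ell}$ for the length $\ell$ of any walk from $e$ to $g$, which is well defined since in a bipartite graph all walks joining two fixed vertices have the same length parity. Since $e\in H$, for each $s\in S$ the pair $(e,es)=(e,s)$ is an edge, so $s\in V_{-}$; thus $\chi(S)=\{-1\}$. What remains is to prove that $\chi$ is a homomorphism.

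To that end I would first dispatch factors in $H$: by Proposition \ref{leftaction}, $L_{h}$ is a graph automorphism for $h\in H$ and therefore preserves walk lengths, so splicing a walk from $e$ to $h$ onto the $L_{h}$-image of a walk from $e$ to $g$ produces a walk from $e$ to $hg$ whose length is the sum of the two, giving $\chi(hg)=\chi(h)\chi(g)$ for all $h\in H$, $g\in G$. For a general first factor, connectedness together with Proposition \ref{isolated} shows that any $g\in G\setminus H$ is not isolated, hence $g=hs$ for some $h\in H$, $s\in S_O$; by the previous identity this reduces the homomorphism property to showing that left multiplication by an element of $S_O$ reverses the sign $\chi$, i.e.\ $\chi(sg')=-\chi(g')$ for all $s\in S_O$, $g'\in G$.

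This last reduction is the main obstacle, and it is exactly the point at which pair-graphs differ from Cayley graphs: in the Cayley case the converse is immediate because every left translation is a graph automorphism, whereas here only translations by $H$ are (Proposition \ref{leftaction}), so a walk from $e$ to $g'$ cannot simply be carried over to a walk from $s$ to $sg'$. To handle it I would analyse how an admissible walk starting at $e$ moves through the coset partition $G/H$, using that the symmetry of $S$ turns every edge-step into a right multiplication by an element of $S$ — by an element of $S_O$ when the step crosses between $H$ and $G\setminus H$, with $s\in S_O$ iff $s^{-1}\in S_O$ — and bringing in Theorem \ref{connectness}, which guarantees $\langle H\cap(S_H\cup S_OS_O^{-1})\rangle=H$. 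Isolating this as a lemma comparing the parity $\chi$ with products of elements of $S$ along walks is the technically delicate step on which the converse rests.
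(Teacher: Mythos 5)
Your sufficiency argument is correct and complete: by Definition \ref{def} every edge of $\mathcal{G}(G,H,S)$ has the form $\{h,hs\}$ with $h\in H$ and $s\in S$, and $\chi(hs)=-\chi(h)$, so $\chi^{-1}(1)$ and $\chi^{-1}(-1)$ separate the endpoints of every edge. This is the same argument the paper relies on (it gives no proof of its own, deferring to the Cayley-graph case in \cite{Davidoff2003}). The converse, however, you do not prove: after defining $\chi$ by walk parity and showing $\chi(S)=\{-1\}$, you reduce the homomorphism property to the claim that $\chi(sg')=-\chi(g')$ for all $s\in S_O$, $g'\in G$, and leave that claim as a ``technically delicate step''. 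That is a genuine gap, and it is the entire content of the converse.

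Moreover, the gap cannot be closed in the stated generality, because the converse itself fails without further hypotheses. Take $G=\mathbbm{Z}/12\mathbbm{Z}$, $H=\{0,3,6,9\}$, $S=\{1,4,8,11\}$. Then $S=-S$ and $S\cap H=\emptyset$, so the graph is bipartite with parts $H$ and $G\setminus H$; since $11-8=3$ generates $H$ and $S$ meets both nontrivial cosets, Theorem \ref{connectness} gives connectedness. Yet the only nontrivial homomorphism $\mathbbm{Z}/12\mathbbm{Z}\to\{-1,1\}$ is $x\mapsto(-1)^{x}$, which sends $4\in S$ to $+1$, so no $\chi$ with $\chi(S)=\{-1\}$ exists. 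The obstruction is exactly the one you isolated: left translation by elements outside $H$ is not a graph automorphism (see the remark after Proposition \ref{leftaction}), so walk parity need not define a character of $G$, and the paper's one-line deferral to the Cayley-graph proof passes over precisely this point. The converse does hold when $[G:H]=2$ and $S\subset G-H$ is symmetric, since then Proposition \ref{bipcayley} identifies $\mathcal{G}(G,H,S)$ with the Cayley graph $\mathcal{G}(G,S)$ and the classical argument applies; that is the hypothesis under which your intended reduction would go through.
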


The proof of this proposition is the same as the one for Cayley graphs, see
for example chapter 4 of {\cite{Davidoff2003}}. Note that if such a
homomorphism exists then $\chi ( S ) = \chi ( S^{-1} ) = \{ -1 \}$, so the
elements of $S$ and $S^{-1}$ would be on the same partition.

\begin{example}
  \label{bipartite}Let $G=A_{4}$, the alternating group of 4 letters, $H$, the
  Klein four group embedded as a subgroup of $G$, and $S= \{ ( 1,2 ) (
  3,4 )$, $( 1,4 ) ( 2,3 )$, $( 1,2,3 )$, $( 1,4,3 )$, $( 2,3,4 )$, $( 2,4,3 ) \}$, using
  cycle notation. Observe that $( 1,3,2 )$, the inverse of $( 1,2,3 )^{}$, is
  not contained in $S$. The resulting $\mathcal{G} ( G,H,S )$ is a bipartite
  graph.

  \begin{figure}[h]
    \resizebox{7.5cm}{4cm}{\includegraphics{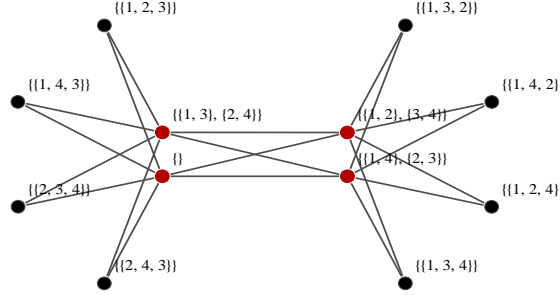}}
    \caption{The bipartite pair-graph $\mathcal{G} ( G,H,S )$.}
  \end{figure}

  Any homomorphism $\chi :G \rightarrow \{ -1,1 \}$ with $\chi ( S ) = \{ -1
  \}$, would have $\chi ( S^{-1} ) = \{ -1 \}$, but in this case $( 1,3,2 ) =
  ( 1,2 ) ( 3,4 ) \cdummy ( 1,4,3 )$, therefore there are no homomorphisms
  that satisfy the conditions.
\end{example}

For the case $S \cap H= \emptyset$, the sets $H$ and $G-H$ are a bipartition
of $\mathcal{G} ( G,H,S )$, so the nontrivial pair-graphs $\mathcal{G} ( G,H,S
)$ with with $S \cap H= \emptyset$ are bipartite. An example can be seen in
Figure \ref{graph1}.

\section{Regular pair-graphs $\mathcal{G} ( G,H,S )$ }

From the results of Section $2$, a group-subgroup pair graph $\mathcal{G} (
G,H,S )$ is regular when $H=G$, or when $H$ is a subgroup or order 2 and
$S_{H}$ is empty. In this section we restrict to the latter case, where the
resulting pair-graph is also bipartite. \

\begin{proposition}
  \label{bipcayley}Let $G$, $H$ and $S$ be as described above. If $S$ is a
  symmetric set, the resulting $\mathcal{G} ( G,H,S )$ is a Cayley graph.
  Namely, $\mathcal{G} ( G,H,S ) =\mathcal{G} ( G,S )$.
\end{proposition}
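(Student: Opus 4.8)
The plan is to exhibit an explicit symmetric generating set $S$ for $G$ and show that the Cayley graph $\mathcal{G}(G,S)$ coincides edge-for-edge with the pair-graph $\mathcal{G}(G,H,S)$. The setting is $[G:H]=2$ and $S_H=S\cap H=\emptyset$, so $S=S_O\subset G-H$, and $S$ is assumed symmetric. The only nontrivial coset besides $H$ is $G-H$ itself, and left multiplication by any $s\in S$ sends $H$ into $G-H$ and $G-H$ into $H$, since $Hs=G-H$ whenever $s\notin H$. This ``swap'' behaviour is the structural fact that makes the two graphs agree.

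First I would recall, from the alternative description in Definition~\ref{def} and the discussion following it, that since $S_H=\emptyset$ the pair-graph is $\mathcal{G}(G,H,S)=\bar{\mathcal{G}}(G,H,S_O)$, whose edges are $(h,hs)$ for $h\in H$, $s\in S_O$, together with $(x,xs^{-1})$ for $x\in\bigcup_{s\in S_O}Hs=G-H$ and $s\in Hx\cap S_O$. Then I would check that an edge $(x,xs^{-1})$ with $x\in G-H$ and $s\in Hx\cap S_O$ is exactly the reverse of an edge of the first type: writing $h=xs^{-1}\in H$ (this lies in $H$ because $x\in Hs$ forces $xs^{-1}\in H$), the edge is $(hs,h)$, which is the same undirected edge as $(h,hs)$. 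Hence the edge set of $\mathcal{G}(G,H,S)$ is precisely $\{\{h,hs\}: h\in H,\ s\in S\}$.

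Next I would compare with the Cayley graph. By definition, $\mathcal{G}(G,S)$ has an edge $\{g,gs\}$ for every $g\in G$ and every $s\in S$; this makes sense as an undirected graph because $S$ is symmetric. Every such edge either has $g\in H$, in which case it is $\{h,hs\}$ and already appears in the pair-graph's edge set, or has $g\in G-H$, in which case $gs\in H$ (as $s\notin H$) and, setting $h=gs\in H$ and $s'=s^{-1}\in S$ (here symmetry of $S$ is used), the edge is $\{h s', h\}=\{h,hs'\}$, again an edge of the pair-graph. Conversely every edge $\{h,hs\}$ of the pair-graph is manifestly an edge of $\mathcal{G}(G,S)$ by taking $g=h$. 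Since both graphs have vertex set $G$ and the same edge set, they are equal.

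The one point that requires genuine care — the ``main obstacle,'' though it is mild — is verifying that passing from directed edges $(x,xs^{-1})$ in $\bar{\mathcal{G}}(G,H,S_O)$ to undirected edges does not create or lose adjacencies, i.e.\ that the condition ``$s\in Hx\cap S_O$'' is equivalent to ``$xs^{-1}\in H$ and $s\in S$,'' and that this is symmetric in the two endpoints of the edge. This hinges on $[G:H]=2$ (so $Hx=G-H$ is a single coset and $x\in Hs\iff s\in Hx$) and on $S\subset G-H$ with $S=S^{-1}$; both hypotheses are in force. Once this bookkeeping is done the identification of edge sets is immediate, and the equality $\mathcal{G}(G,H,S)=\mathcal{G}(G,S)$ follows.
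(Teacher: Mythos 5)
Your proof is correct and follows essentially the same route as the paper's: both reduce to the alternative description of the pair-graph with $S_O=S$, observe that the edges $(x,xs^{-1})$ for $x\in G-H$ are just the reverses of the edges $(h,hs)$ for $h\in H$, and use the symmetry of $S$ to rewrite everything as $\{g,gs\}$ for $g\in G$, $s\in S$. Your version merely spells out the coset bookkeeping (that $Hx=G-H$ and $gs\in H$ for $g,s\in G-H$) which the paper leaves implicit.
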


\begin{proof}
  The conditions imply that $S_{O} =S$, then by the alternative definition of
  the pair-graph the edges are given by
  \begin{equation*}
    ( h,h s ) , \forall  h  \in  H, \hspace{2mm} \forall s  \in  S \hspace{5mm}
    \tmop{ and } \hspace{5mm} ( x,x s^{-1} ) , \forall x  \in   ( G-H ),\hspace{2mm} \forall s  \in  S.
  \end{equation*}
  Since S is symmetric one can simply write $( x,x s )$, $\forall  x  \in  G$,
  $\forall s  \in  S$, which is the definition of Cayley graph.
\end{proof}

Proposition \ref{bipcayley} states another way in which the graph $\mathcal{G}
( G,H,S )$ can be seen as a generalization of certain class of bipartite
Cayley graphs. As an example, for $p$,$q$ prime numbers, when $p$ is not a
square modulo $q$, the $X^{p,q}$ Ramanujan graphs from Lubotzky, Phillips and
Sarnak {\cite{Lubotzky1988}} can be identified with pair-graphs $\mathcal{G} (
\tmop{PGL}_{2} ( q ) , \tmop{PSL}_{2} ( q ) ,S_{p,q} )$ .

\subsection{Generating sets and actions.}

In this section and the next one we fix a group $G$ and subgroup
$H$ of order $n$ and index 2. Let $\mathcal{P}$ denote the power set of $G-H$.
In this section we consider transformations on the generating sets that result
in isomorphic group-subgroup pair graphs.

\begin{proposition}
  \label{rigact}Let $S_{1} \in \mathcal{P}$, if $S_{2}  =R_{h'} ( S_{1} )$,
  where $R_{h'}$ is the right translation by $h' \in H$, then $\mathcal{G} (
  G,H,S_{1} ) \cong  \mathcal{G} ( G,H,S_{2} )$.
\end{proposition}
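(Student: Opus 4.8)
The plan is to exhibit an explicit graph isomorphism $\varphi:\mathcal{G}(G,H,S_1)\to\mathcal{G}(G,H,S_2)$ and check it maps edges to edges. Since $S_2=S_1 h'=\{s h':s\in S_1\}$ with $h'\in H$, the natural candidate is the right translation $R_{h'}:G\to G$, $x\mapsto x h'$. This is a bijection on the vertex set $G$, so it suffices to verify that $(x,y)$ is an edge of $\mathcal{G}(G,H,S_1)$ if and only if $(x h',y h')$ is an edge of $\mathcal{G}(G,H,S_2)$. Note first that $R_{h'}$ preserves the partition $\{H,G-H\}$: for $h\in H$ we have $h h'\in H$, and for $x\in G-H$ we have $x h'\in G-H$ since $h'\in H$. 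Because $S_1\subset G-H$, both $S_1$ and $S_2$ are contained in $G-H$, so in both graphs $S_H=\emptyset$ and every edge runs between $H$ and $G-H$; this is the only type of edge we need to track.

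First I would handle the edges as described in the original Definition~\ref{def}. An edge of $\mathcal{G}(G,H,S_1)$ is a pair $(h,hs)$ (together with its reverse) for $h\in H$ and $s\in S_1$. Applying $R_{h'}$ gives $(h h', hs h')$. Now $h h'\in H$, and I want to write $hs h'$ as $(h h')\,\tilde s$ for some $\tilde s\in S_2$: indeed $hs h' = (h h')\,(h'^{-1} s h')$, which is not obviously of the right form. The cleaner route is to use the alternative characterization via $\bar{\mathcal{G}}(G,H,S_O)$: here $S_{O,1}=S_1$ and $S_{O,2}=S_2$. For $h\in H$ and $s\in S_1$, the edge $(h,hs)$ maps under $R_{h'}$ to $(hh',hsh')$; since $s h'\in S_1 h'=S_2$ and $hh'\in H$, this is exactly the edge $(h',h's)$-type pair $(g,g\sigma)$ with $g=hh'\in H$ and $\sigma=sh'\in S_2$, hence an edge of $\mathcal{G}(G,H,S_2)$. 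The reverse direction is identical using $R_{h'^{-1}}=R_{h'}^{-1}$ and $S_1=S_2 h'^{-1}$. For the ``back'' edges $(x,xs^{-1})$ with $x\in\bigcup_{s\in S_1}Hs$ and $s\in Hx\cap S_1$: one checks $xh'\in\bigcup_{s\in S_2}Hs$ (since $Hs h'=H(sh')$ and $sh'\in S_2$) and that $s h'\in H(xh')\cap S_2$, so $R_{h'}$ carries it to the corresponding back edge $(xh',xh'(sh')^{-1})=(xh',x s^{-1}h')$ — and $x s^{-1}h' = (xh')(h'^{-1}s^{-1}h')$; again I would instead verify directly from the membership conditions that $(xh',(xh')(sh')^{-1})$ is a back edge of $\mathcal{G}(G,H,S_2)$, which it is because $sh'\in S_2$ and $Hxh'=H(xh')$ contains $sh'$ iff $Hx$ contains $s$.

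The main obstacle is purely bookkeeping: the outer-graph back-edge condition ``$s\in Hx\cap S_O$'' involves the coset $Hx$, and one must confirm that right translation by $h'\in H$ interacts correctly with these cosets — specifically that $H(xh')=\{y h':y\in Hx\}$ is false in general (it is $Hxh'$, which as a set equals $\{yh':y\in Hx\}$, so actually fine) and that $S_O$ translates as $S_{O,2}=S_{O,1}h'$. Once one records that $s\in Hx$ iff $sh'\in Hxh'=H(xh')$, every clause of the definition transports verbatim. I would therefore structure the proof as: (1) define $\varphi=R_{h'}$, note it is a bijection of $G$ preserving $H$ and $G-H$; (2) observe $S_{O,2}=S_{O,1}h'$ and the coset identity $s\in Hx\Leftrightarrow sh'\in H(xh')$; (3) run through the two edge types in the alternative definition and check $\varphi$ sends $\mathcal{G}(G,H,S_1)$-edges to $\mathcal{G}(G,H,S_2)$-edges; (4) note the inverse $R_{h'^{-1}}$ does the same with the roles of $S_1,S_2$ swapped, so $\varphi$ is a graph isomorphism.
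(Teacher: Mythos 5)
Your choice of isomorphism is wrong, and you actually noticed the problem before talking yourself out of it. Under the full right translation $R_{h'}$, the edge $(h,hs)$ goes to $(hh',\,hsh')$, and
\[
hsh' \;=\; (hh')\,(h'^{-1}sh'),
\]
so the ``generator'' connecting the image vertices is the conjugate $h'^{-1}sh'$, not $sh'$. Your sentence ``this is exactly the pair $(g,g\sigma)$ with $g=hh'$ and $\sigma=sh'$'' asserts $hsh'=(hh')(sh')$, which forces $h's=s$, i.e.\ $h'=e$. Switching to the ``alternative characterization'' via $\bar{\mathcal{G}}(G,H,S_O)$ does not help, because the edge set is the same. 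The same conjugate reappears in your back-edge computation: $xh'(sh')^{-1}=xs^{-1}$, not $xs^{-1}h'=R_{h'}(xs^{-1})$, and you again wave this away. A concrete counterexample: $G=\mathfrak{S}_3$, $H=A_3$, $S_1=\{(1\,2)\}$, $h'=(1\,2\,3)$. The edge $(e,(1\,2))$ of $\mathcal{G}(G,H,S_1)$ maps under $R_{h'}$ to $((1\,2\,3),\,(1\,2)(1\,2\,3))$, but the only neighbour of $(1\,2\,3)$ in $\mathcal{G}(G,H,S_1h')$ is $(1\,2\,3)\cdot(1\,2)(1\,2\,3)\neq(1\,2)(1\,2\,3)$. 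So $R_{h'}$ is not a homomorphism onto $\mathcal{G}(G,H,S_2)$ (it would instead land in the graph generated by the conjugate set $h'^{-1}S_1h'$).

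The fix, which is what the paper does, is to translate only the outer vertices: define $\varphi(h)=h$ for $h\in H$ and $\varphi(x)=xh'$ for $x\in G-H$. Then $(h,hs)\mapsto(h,hsh')=(h,h(sh'))$ with $sh'\in S_2$, and for the back edges $(x,xs^{-1})\mapsto(xh',xs^{-1})$ with $xh'=(xs^{-1})(sh')$, so both edge types land in $\mathcal{G}(G,H,S_2)$; the inverse is $\varphi_{h'^{-1}}$. The structural reason your map fails is visible in your own back-edge computation: the $H$-endpoint of an edge must stay fixed for the translated generator $sh'$ to do its job, which is exactly why the isomorphism cannot move $H$.
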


\begin{proof}
  For $h' \in H$ consider the map $  \varphi_{h'} :G  \rightarrow  G$, defined
  as $\varphi_{h'} ( x ) =x h'$ for $x  \in  G-H$ and $\varphi ( h ) =h$ for
  $h \in H$. The map is clearly bijective, then for $h  \in H  \nocomma ,s \in
  S_{1}$, we have
  \begin{align*}
    ( h,h s ) & \longmapsto  ( h,h s h' )  =  ( h,h s' ),\\
    ( x,x s^{-1} ) & \longmapsto  ( x h' \nocomma \nocomma ,x s^{-1} )  =  (
    x h' \nocomma ,x h' s'^{-1} ) ,
  \end{align*}
  with $s'  =s h' \in S_{2}$, therefore is an graph homomorphism with inverse
  $\varphi_{h'^{-1}}$ and the graphs $\mathcal{G} ( G,H,S_{1} )$ and
  $\mathcal{G} ( G,H,S_{2} )$ are isomorphic.
\end{proof}

\begin{proposition}
  Let $S_{1} \in \mathcal{P}$, if $S_{2}  = \psi ( S_{1} )$, where $\psi$ is a
  group automorphism of G, then the pairs graphs $\mathcal{G} ( G,H,S_{1} )$
  and $ \mathcal{G} ( G,H,S_{2} )$ are isomorphic.
\end{proposition}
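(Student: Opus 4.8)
The plan is to exhibit an explicit graph isomorphism induced by the group automorphism $\psi$, in the same spirit as the proof of Proposition \ref{rigact}. First I would observe that since $H$ is the unique subgroup of index $2$ (being of index $2$, it is automatically normal, and any automorphism must send a subgroup of order $n$ to a subgroup of order $n$), we have $\psi(H)=H$ and consequently $\psi(G-H)=G-H$. In particular $\psi$ restricts to a bijection of $H$ and a bijection of $G-H$, so $S_2=\psi(S_1)$ is again an element of $\mathcal{P}$, i.e.\ a subset of $G-H$, and the pair-graph $\mathcal{G}(G,H,S_2)$ is well-defined (the symmetry condition on $S\cap H$ is vacuous here since $S_1,S_2\subset G-H$).

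Next I would take $\varphi=\psi$ itself as the candidate vertex bijection $G\to G$ and check that it carries edges of $\mathcal{G}(G,H,S_1)$ to edges of $\mathcal{G}(G,H,S_2)$. Using the alternative description of the pair-graph (valid here because $S_{1,H}=\emptyset$), the edges are $(h,hs)$ for $h\in H$, $s\in S_1$, together with $(x,xs^{-1})$ for $x\in G-H$, $s\in Hx\cap S_1$. Applying $\psi$:
\begin{align*}
  (h,hs) &\longmapsto (\psi(h),\psi(h)\psi(s)) = (h',h's'),\\
  (x,xs^{-1}) &\longmapsto (\psi(x),\psi(x)\psi(s)^{-1}) = (x',x's'^{-1}),
\end{align*}
where $h'=\psi(h)\in H$, $x'=\psi(x)\in G-H$, and $s'=\psi(s)\in S_2$. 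For the second family one also needs $s'\in Hx'$, which follows because $\psi$ is a homomorphism sending $H$ to $H$: $s\in Hx$ implies $\psi(s)\in\psi(H)\psi(x)=Hx'$. Hence every edge maps to an edge, $\psi$ is a bijection with inverse $\psi^{-1}$ (also a group automorphism fixing $H$ setwise), and the same argument applied to $\psi^{-1}$ shows the inverse map is also edge-preserving. Therefore $\psi$ is a graph isomorphism $\mathcal{G}(G,H,S_1)\xrightarrow{\sim}\mathcal{G}(G,H,S_2)$.

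The only genuine subtlety — and the step I would be most careful about — is the claim $\psi(H)=H$; everything else is a routine transport of structure. This relies crucially on $[G:H]=2$, which forces $H$ to be the unique (hence characteristic) subgroup of its order; without this hypothesis the statement would need $\psi$ to stabilize $H$ as an extra assumption. I would state this normality/uniqueness observation explicitly at the start of the proof and then let the edge computation above finish the argument.
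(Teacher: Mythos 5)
Your overall strategy is exactly the paper's: take $\psi$ itself as the vertex bijection and check that it transports edges, which your computation does correctly. The problem is the step you yourself single out as the crux, namely $\psi(H)=H$. Your justification --- that $H$ is the unique subgroup of order $n$, hence characteristic --- is false in general. A subgroup of index $2$ is normal but need not be characteristic, and a group of order $2n$ may have several subgroups of order $n$: for $G=(\mathbbm{Z}/2\mathbbm{Z})^{2}$ there are three subgroups of index $2$, and $\operatorname{Aut}(G)\cong\mathfrak{S}_{3}$ permutes them transitively. Moreover the hypothesis $\psi(H)=H$ cannot simply be dropped from the statement: with $G=\{e,a,b,c\}$ the Klein four group, $H=\{e,a\}$, $S_{1}=\{b\}$, and $\psi$ the automorphism exchanging $a$ and $b$, the graph $\mathcal{G}(G,H,S_{1})$ is a perfect matching on four vertices, whereas $S_{2}=\psi(S_{1})=\{a\}\subset H$ yields a single edge inside $H$ together with two isolated vertices (and $S_{2}\notin\mathcal{P}$ to begin with, even though the pair-graph is still defined since $S_{2}\cap H$ is symmetric). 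These graphs are not isomorphic, so the conclusion genuinely requires $\psi(H)=H$ as an added hypothesis, or a restriction to pairs in which $H$ is characteristic in $G$.

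To be fair, the paper's own proof opens with ``Since $[G:H]=2$, $H$ is invariant under $\psi$,'' which is exactly the same unjustified (and in general false) claim; your proposal reproduces the published argument, gap included, the difference being that you attempted to justify the claim and the justification does not hold. Everything downstream of $\psi(H)=H$ in your write-up --- the well-definedness of $S_{2}\in\mathcal{P}$, the edge computations $(h,hs)\mapsto(\psi(h),\psi(h)\psi(s))$ and $(x,xs^{-1})\mapsto(\psi(x),\psi(x)\psi(s)^{-1})$ with the check that $\psi(s)\in H\psi(x)$, and the observation that $\psi^{-1}$ furnishes the inverse graph map --- is correct and matches the paper.
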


\begin{proof}
  Since $[ G:H ] =2$, $H$ is invariant under $\psi$, then for any $h  \in H, x
  \in G-H,s \in S_{1}$,
  \begin{align*}
    ( h ,h s ) & \longmapsto  ( \psi ( h ) , \psi
    ( h ) \psi ( s ) )  = ( \psi ( h ) , \psi ( h ) s' ),\\
    ( x,x s^{-1} ) & \longmapsto  ( \psi ( x ) , \psi ( x ) s'^{-1} ) ,
  \end{align*}
  with $s'  =  \psi ( s ) \in S_{2}$, therefore $\psi$ is a graph isomorphism
  with inverse $\psi^{-1}$.
\end{proof}

Any orbit on $\mathcal{P}$ of right actions $R_{h}$ by elements of $H$ and
group isomorphisms $\psi$ of $G$ consists of sets that generate isomorphic
graphs.

\subsection{Spectra of pair-graphs $\mathcal{G} ( G,H,S )$ }

As mentioned in the beginning of the section, under the current assumptions
the pair-graphs $\mathcal{G} ( G,H,S )$ are regular bipartite. The spectrum of
these graphs is symmetric about 0 and its largest eigenvalue is the trivial
eigenvalue $\mu_{0} = | S |$. Moreover, any eigenvalue $\mu$ satisfies $| \mu
| \leqslant | S |$. In this case the adjacency operator for the
pair-graph $\mathcal{G} ( G,H,S )$ is given by

\[
A f(y) =
\begin{cases}
  \sum_{s \in S} f ( y s )       & \text{if } y \in H \\
  \sum_{s \in S} f ( y s^{-1} )   & \text{if } y \in G-H
\end{cases}
\]

\begin{proposition}
  If $S_{1} \cup S_{2} =G-H$ and $S_{1} \cap S_{2} = \emptyset$, then the
  adjacency operator of $\mathcal{G} ( G,H,G-H )$ is the sum of the adjacency
  operators of $\mathcal{G} ( G,H,S_{1} )$ and $\mathcal{G} ( G,H,S_{2} )$.
\end{proposition}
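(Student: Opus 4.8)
The plan is to verify directly from the explicit formula for the adjacency operator of a regular bipartite pair-graph, which is given just before the statement. Since $S_1, S_2$ partition $G-H$, for any $y \in G$ and any function $f \in \mathcal{L}(G)$ the sum over $s \in G-H$ splits as a sum over $s \in S_1$ plus a sum over $s \in S_2$. I would write $A$, $A_1$, $A_2$ for the adjacency operators of $\mathcal{G}(G,H,G-H)$, $\mathcal{G}(G,H,S_1)$, $\mathcal{G}(G,H,S_2)$ respectively, and check the identity $Af = A_1 f + A_2 f$ separately on the two pieces of the bipartition.

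First I would treat the case $y \in H$: here $Af(y) = \sum_{s \in G-H} f(ys) = \sum_{s \in S_1} f(ys) + \sum_{s \in S_2} f(ys) = A_1 f(y) + A_2 f(y)$, using disjointness to guarantee no term is double-counted and exhaustiveness to guarantee no term is missed. Then I would treat the case $y \in G-H$: here $Af(y) = \sum_{s \in G-H} f(ys^{-1})$, and again the partition of $G-H$ into $S_1 \sqcup S_2$ gives $Af(y) = \sum_{s \in S_1} f(ys^{-1}) + \sum_{s \in S_2} f(ys^{-1}) = A_1 f(y) + A_2 f(y)$. One small point worth a remark: the formula for the adjacency operator of $\mathcal{G}(G,H,S_i)$ in the regular bipartite setting used here is $\sum_{s \in S_i} f(ys^{-1})$ for $y \in G-H$, rather than the general pair-graph formula $\sum_{s \in S_i \cap Hy} f(ys^{-1})$; this is consistent because $S_i \subset G-H$ and for $y \in G-H$ one has $Hy = G-H$, so $S_i \cap Hy = S_i$. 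Likewise one should note that $S_i \cap H = \emptyset$, so there is no inner Cayley-graph contribution to worry about, and each $\mathcal{G}(G,H,S_i)$ is itself a regular bipartite pair-graph to which the displayed operator formula applies.

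Since the identity $Af(y) = A_1 f(y) + A_2 f(y)$ holds for every $y \in G$ and every $f$, we conclude $A = A_1 + A_2$ as operators on $\mathcal{L}(G)$, which is the assertion. There is essentially no obstacle here: the statement is a direct bookkeeping consequence of the additivity of a sum over a set under a partition of that set, combined with the explicit operator formula recalled in the preceding paragraph. If anything, the only thing to be careful about is making explicit why the summation range for $y \in G-H$ is the full $S_i$ in each summand, so that the two right-hand sums reassemble exactly into the sum over $G-H$; I would include that one-line justification and otherwise keep the proof to the two-case computation above.
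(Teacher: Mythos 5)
Your proof is correct and follows essentially the same route as the paper: the paper's proof is exactly the two-case computation splitting $\sum_{s \in G-H}$ into $\sum_{s \in S_1} + \sum_{s \in S_2}$ for $y \in H$ and $y \in G-H$. Your extra remark justifying that the summation range for $y \in G-H$ is all of $S_i$ (since $S_i \subset G-H = Hy$) is a sensible clarification but does not change the argument.
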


\begin{proof}
  Follows from the definition
  \begin{align*}
    \sum_{s \in G-H} f ( h s )  = & \sum_{s \in S_{1}} f ( h s )   \noplus +
    \sum_{s \in S_{2}} f ( h s ) ,\\
    \sum_{s \in G-H} f ( x s^{-1} )  = & \sum_{s \in S_{1}} f ( x s^{-1} )
    \noplus + \sum_{s \in S_{2}} f ( x s^{-1} ) . \qedhere
  \end{align*}

\end{proof}

For a graph $\mathcal{G}= ( V,E )$, the complement graph is $\bar{\mathcal{G}}
= ( V, \bar{E} )$, where $( x,y ) \in \bar{E}$, if and only if $( x,y ) \nin
E$ for $x \neq y$. The eigenvector and eigenvalues of a $k$-regular graph and
its complement can be related, see for example Lemma 8.5.1 of
{\cite{Godsil2001}}. We present a relation between the of eigenvalues and
eigenvectors for pair-graphs $\mathcal{G} ( G,H,S_{1} )$ and $\mathcal{G} (
G,H,S_{2} )$ with $S_{1} \cup S_{2} =G-H$ and $S_{1} \cap S_{2} = \emptyset$.

First, assume that $| S_{1} | =k$, $| S_{2} | =n-k$, then any constant
function $f$ is an eigenfunction of both of the pair-graphs $\mathcal{G} (
G,H,S_{1} )$ and $\mathcal{G} ( G,H,S_{2} )$ corresponding to $\mu_{1} =k$ and
$\lambda_{1} =n-k$ respectively. Similarly, for $c \in \mathbbm{C}$, the
function $f=c ( \delta_{H} - \delta_{G-H} )$ is an eigenfunction of
$\mathcal{G} ( G,H,S_{1} )$ and $\mathcal{G} ( G,H,S_{2} )$ corresponding to
$\mu_{2n} =-k$ and $\lambda_{2n} =n-k$.

\begin{theorem}
  \label{symmetry}Let $S_{1}$ and $S_{2}$ as before with $| S_{1} | =k, |
  S_{2} | =n-k$ for numbers $0<k<n$, then
  \begin{itemizedot}
  \item (i) if $\mathcal{G} ( G,H,S_{1} )$ is not connected, then there are
    independent eigenfunctions $f$ and $g$ of $\mathcal{G} ( G,H,S_{1} )$
    associated to $\mu = \pm k$ such that $ f - g$ is an eigenfunction of of
    $\mathcal{G} ( G,H,S_{2} )$ corresponding to the eigenvalue $- \mu$.

  \item (ii) if $f$ is an eigenfunction of $\mathcal{G} ( G,H,S_{1} )$ \
    associated with an eigenvalue $\mu \neq \pm k$, then $f$ is an
    eigenfunction of $\mathcal{G} ( G,H,S_{2} )$ corresponding to the
    eigenvalue $- \mu$.
  \end{itemizedot}
\end{theorem}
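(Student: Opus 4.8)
The plan is to exploit the block structure of the adjacency operators. Since $H$ has index $2$ in $G$, with respect to the decomposition $\mathcal{L}(G) = \mathcal{L}(H) \oplus \mathcal{L}(G-H)$ the adjacency operator of $\mathcal{G}(G,H,S_1)$ has the form $A_1 = \begin{pmatrix} 0 & P_1 \\ P_1^{t} & 0 \end{pmatrix}$, where $P_1$ is the $|H| \times |G-H|$ matrix recording the edges, because no two vertices of $H$ and no two vertices of $G-H$ are adjacent (here $S_H = \emptyset$). The key observation is that the edge set of $\mathcal{G}(G,H,S_1)$ and the edge set of $\mathcal{G}(G,H,S_2)$ together form the complete bipartite graph between $H$ and $G-H$ with a perfect ``each vertex of $H$ adjacent to each vertex of $G-H$'' structure: more precisely, for each $h \in H$ and $x \in G-H$, exactly one of $h \sim x$ in $\mathcal{G}(G,H,S_1)$ or in $\mathcal{G}(G,H,S_2)$ holds, because $h^{-1}x$ lies in $G-H = S_1 \sqcup S_2$. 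Hence $P_1 + P_2 = J$, the all-ones $|H| \times |G-H|$ matrix, so $A_1 + A_2 = \begin{pmatrix} 0 & J \\ J^{t} & 0 \end{pmatrix}$.

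First I would record the two known eigenfunctions explicitly: the constant function and $\delta_H - \delta_{G-H}$, checking (as the paragraph before the theorem already does) that these give eigenvalues $\pm k$ for $A_1$ and $\mp(n-k)$ for $A_2$ — wait, one must be careful with signs: the constant function gives $\mu_1 = k$ for $A_1$ and $\lambda_1 = n-k$ for $A_2$, while $\delta_H - \delta_{G-H}$ gives $-k$ for $A_1$ and $-(n-k)$ for $A_2$; these are the eigenfunctions lying in the span of $\delta_H$ and $\delta_{G-H}$, which is exactly the range of the rank-two operator $A_1 + A_2$. Then for part (ii): suppose $A_1 f = \mu f$ with $\mu \neq \pm k$. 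Write $f = f_H + f_O$ according to the bipartition. Applying $A_1$ swaps the two blocks, so $P_1 f_O = \mu f_H$ and $P_1^{t} f_H = \mu f_O$. I claim $f$ is orthogonal to the two-dimensional ``trivial'' space: indeed $Jf_O = (\sum f_O)\mathbf{1}_H$, and summing the relation $P_1 f_O = \mu f_H$ over $H$ while using that each column of $P_1$ (indexed by $x \in G-H$) has a fixed number of ones equal to $\deg(x) = |S \cap Hx|$... here I need that all column sums and row sums interact correctly; the cleanest route is to note $\mathbf{1}_H^{t} P_1 f_O = \mu \sum_H f_H$ and also $\mathbf{1}_H^{t} P_1 = (\text{column sums}) $, and since $\mu \neq \pm k$ the function $f$ cannot have a nonzero component along $\mathbf{1}$ or $\delta_H - \delta_{G-H}$ — this is where connectedness or the eigenvalue bound from the previous subsection is used to force $\sum_H f_H = \sum_O f_O = 0$. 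Given that, $Jf_O = 0$ and $J^{t}f_H = 0$, so $A_2 f = (J - P_1, \text{blockwise}) f$ gives $A_2 f_H = P_2^{t} f_H = J^{t} f_H - P_1^{t} f_H = -\mu f_O$ and similarly $P_2 f_O = -\mu f_H$, i.e. $A_2 f = -\mu f$.

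For part (i), when $\mathcal{G}(G,H,S_1)$ is disconnected the eigenspace for $\mu = k$ has dimension at least two (the number of connected components containing vertices of $H$, by the proposition on the multiplicity of $\mu^+$ in the earlier subsection, specialized to the regular bipartite case). So I can pick an eigenfunction $f$ for $+k$ that is \emph{not} the constant, and correspondingly an eigenfunction $g$ for $-k$ obtained by flipping signs on the $G-H$ part of $f$ (if $A_1 f = kf$ then $A_1(\epsilon f) = -k(\epsilon f)$ where $\epsilon = \delta_H - \delta_{G-H}$, since conjugating by the diagonal sign matrix negates an off-diagonal block operator); then $f$ and $g = \epsilon f$ are independent, and $f - g = 2 f_O$ (supported on $G-H$) — I would instead take $f - g$ and check directly that $A_2(f-g) = \mp k (f - g)$, using $P_1 + P_2 = J$ together with the fact that a non-constant eigenfunction for $\pm k$ of $A_1$ is already $J$-annihilated on the relevant block because its $H$-part sums to zero over each component other than... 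Actually the slick formulation: any eigenfunction $h$ of $A_1$ for eigenvalue $k$ satisfies $\sum_{x \in Hx_i \cap \text{supp}} $ constraints making $f - g$ lie in $\ker J \oplus \ker J^{t}$ once we subtract the constant, hence $A_2(f-g) = -(f-g)\cdot(\pm k)$ follows exactly as in (ii).

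The main obstacle I anticipate is bookkeeping the ``trivial'' $2$-dimensional subspace correctly: one must show that every eigenfunction for $\mu \neq \pm k$ is $J$-orthogonal in the appropriate sense (so that $A_1 + A_2 = \bigl(\begin{smallmatrix}0 & J\\ J^{t} & 0\end{smallmatrix}\bigr)$ acts as zero on it), and this requires identifying precisely which eigenfunctions of the rank-two operator $\bigl(\begin{smallmatrix}0 & J\\ J^{t} & 0\end{smallmatrix}\bigr)$ realize the eigenvalues $\pm k$ versus $\pm(n-k)$ for $A_1$ versus $A_2$ — there is a genuine sign subtlety because $k + (n-k) = n = $ the nonzero singular value of $J$, so the eigenvalue $+k$ of $A_1$ and eigenvalue $+(n-k) = n-k$ of $A_2$ sum to $n$, consistent with being the $+n$ eigenvalue of the sum, whereas the eigenvalue $+k$ of $A_1$ paired with eigenvalue $-k$... no, $k \neq n-k$ in general, so these do \emph{not} sum correctly, which is exactly why part (i) only concerns the $\pm k$ eigenvalues and asserts a relation for $f - g$ rather than for $f$ alone. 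Getting these signs and which combination lands where is the delicate point; everything else is the routine block computation with $P_1 + P_2 = J$.
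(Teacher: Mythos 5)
Your setup ($A_1+A_2=C$ with $C$ the adjacency operator of the complete bipartite graph between $H$ and $G-H$, because $S_1\sqcup S_2=G-H$) is exactly the paper's, and your part (ii) is in substance the paper's argument: since $[G:H]=2$ every row \emph{and} every column of $P_1$ sums to $k$, so pairing the relations $P_1f_O=\mu f_H$ and $P_1^{t}f_H=\mu f_O$ with the all-ones vector yields $k\sum_O f_O=\mu\sum_H f_H$ and $k\sum_H f_H=\mu\sum_O f_O$, hence $(\mu^2-k^2)\sum_H f_H=0$ and both sums vanish; then $A_2f=(C-A_1)f=-\mu f$. You gesture at this chain rather than closing it, and connectedness plays no role here (only $\mu\neq\pm k$ does), but the idea is the right one.

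Part (i) has a genuine gap. Your construction takes $f$ with $A_1f=kf$ and $g=\epsilon f$ with $A_1g=-kg$, so that $f-g=2f_O$ is supported on $G-H$. But $A_2(f_O)=\bigl(Jf_O-P_1f_O\bigr)\oplus 0$ is supported on $H$, so $2f_O$ cannot be an eigenfunction of $A_2$ for a nonzero eigenvalue; the construction fails outright. Your fallback claim --- that a non-constant $\pm k$-eigenfunction of $A_1$ is automatically annihilated by $J$ on the relevant block --- is also false: $\delta_\Gamma$ for a single connected component $\Gamma$ is such an eigenfunction, and $\sum_{h\in H}\delta_\Gamma(h)=|\Gamma\cap H|\neq 0$. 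The missing idea is to take $f$ and $g$ associated to the \emph{same} eigenvalue $\mu$ but to two \emph{different} connected components $\Gamma$ and $\Omega$: namely $f=\delta_\Gamma$, $g=\delta_\Omega$ for $\mu=k$ (or the versions signed by $\epsilon$ for $\mu=-k$). The point that makes this work, and which your sketch never invokes, is Proposition \ref{comps}: since there are no isolated vertices all components are translates of $\Gamma_e$, so $|\Gamma\cap H|=|\Omega\cap H|$ and $|\Gamma-H|=|\Omega-H|$; hence $Cf=\pm\delta_H|\Gamma-H|+\delta_{G-H}|\Gamma\cap H|=Cg$, giving $C(f-g)=0$ and therefore $A_2(f-g)=(C-A_1)(f-g)=-\mu(f-g)$. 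Without the equal-component-size input, nothing forces $f-g$ into $\ker C$.
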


\begin{proof}
  Denote the adjacency graph operator of graphs $\mathcal{G} ( G,H,S_{1} )
  $, $\mathcal{G} ( G,H,S_{2} )$, $\mathcal{G} ( G,H,G-H )$ as $A,B,C$
  respectively, so that $C = A+B$.

  {\tmem{(i)}} For a connected components $\Gamma$ of $\mathcal{G} (
  G,H,S_{1} )$, consider the function $f= \delta_{\Gamma}$ or $f=
  \delta_{\Gamma \cap H} - \delta_{\Gamma -H}$, which can be verified to be
  eigenfunctions corresponding to $\mu =k$ and $\mu =-k$, respectively. We can
  similarly define $g$ with respect to a different connected component
  $\Omega$ of $\mathcal{G} ( G,H,S_{1} )$. Note that there are no isolated
  vertices on the graph, therefore by Proposition \ref{comps}, all the
  connected components have the same cardinality, in particular $| \Gamma \cap
  H | = | \Omega \cap H |$ and $| \Gamma -H | = | \Gamma - \Omega |$. Then, we
  have
  \begin{align*}
    &C f ( h )  =  \sum_{x \in G-H} f ( x ) = \sum_{x \in \kappa -H} f
    ( x ) && \text{for }  h \in H,\\
    &C f ( x ) =  \sum_{h \in H} f ( h ) = \sum_{h \in \kappa \cap H} f ( h )
    && \text{for } x \in G-H.
  \end{align*}

  It follows that $C f =  \delta_{H} | \Gamma -H | + \delta_{G-H} | \Gamma
  \cap H |$ or $C f=- \delta_{H} | \Gamma -H | + \delta_{G-H} | \Gamma \cap H
  |$ depending on the whether $\mu =k$ or $\mu =-k$. By the preceding
  discussion, we have $C  ( f-g ) =0$ and
  \[ B  ( f-g ) =_{}   ( C-A ) ( f-g ) =C ( f-g ) -A ( f-g ) =- \mu ( f-g ) .
  \]
  {\tmem{(iii)}} For an eigenfunction $f$ of $A$ corresponding to an
  eigenvalue $\lambda \neq \pm k$, we have
  \[ B f ( x ) = ( C-A ) f ( x ) =C f ( x )  -  \mu f ( x ) , \]
  therefore it suffices to prove $C f ( x ) =0$. In other words, it suffices to
  prove the two equalities
  \[ \sum_{h \in H} f ( h ) =0, \]
  \[ \sum_{x \in G-H} f ( x ) =0. \]
  First, not that for fixed $s \in S_{1}$ we have
  \[ \sum_{h \in H} f ( h s ) = \sum_{x \in G-H} f ( x ) , \]
  then, by summing over all elements of $S_{1}$,
  \[ \sum_{s \in S} \sum_{h \in H} f ( h s ) = \sum_{s \in S} \sum_{x \in G-H}
  f ( x ) =k \left( \sum_{x \in G-H} f ( x ) \right) . \]
  Since $f$ is an eigenfunction associated with $\mu$, we have
  \[ \sum_{s \in S} \sum_{h \in H} f ( h s ) = \sum_{h \in H} \sum_{s \in S} f
  ( h s ) = \mu \left( \sum_{h \in H} f ( h ) \right) , \]
  therefore we have
  \begin{equation} \label{eqn:eqn1}
    k \left( \sum_{x \in G-H} f ( x ) \right) = \mu \left( \sum_{h \in H} f (
      h ) \right) .
  \end{equation}
  Similarly, by fixing $s \in S_{1}$ and using the equation
  \[ \sum_{x \in G-H} f ( x s^{-1} ) = \sum_{h \in H} f ( h ) , \]
  we obtain the relation
  \begin{equation}\label{eqn:eqn2}
    k \left( \sum_{h \in H} f ( h ) \right) = \mu \left( \sum_{x \in G-H} f (
      x ) \right) .
  \end{equation}
  Finally, combining (\ref{eqn:eqn1}) and (\ref{eqn:eqn2}), we obtain the equations
  \[ \sum_{h \in H} f ( h ) = \frac{\mu^{^{2}}}{k^{2}} \sum_{h \in H} f ( h )
  , \]
  \[ \sum_{x \in G-H} f ( x ) = \frac{\mu^{^{2}}}{k^{2}} \sum_{x \in G-H} f (
  x ) . \]
  Since $| \mu | \neq k$, we conclude \
  \[ \sum_{h  \in H} f ( h ) =0, \]
  \[ \sum_{x \in G-H} f ( x ) =0, \]
  and the proof is complete.
\end{proof}

Recall that when $\mathcal{G} ( G,H,S_{1} )$ is not connected, by Theorem
\ref{trivialeigen} \ the eigenvalue $\mu =k$ has multiplicity equal to the
number $c$ of connected components of $\mathcal{G} ( G,H,S_{1} )$, then by $(
i )$ the graph $\mathcal{G} ( G,H,S_{2} )$ also has the eigenvalue $m=k$ with
multiplicity $c-1$. We can reformulate the result as follows.

\begin{corollary}
  \label{symm}For group $G$ and subgroup $H$ of index $2$ and order $n$. Let
  $S \subset G-H $ with $| S | =k$ and $\lambda_{1} \geqslant \lambda_{2}
  \geqslant \ldots \geqslant \lambda_{2n}$ the spectrum of the pair-graph
  $\mathcal{G} ( G,H,S )$. Then there is a $( n-k )$-regular pair-graph
  $\mathcal{G} ( G,H,S' )$ with spectrum $\mu_{1} \geqslant \mu_{2} \geqslant
  \ldots \geqslant \mu_{2n}$ such that
  \begin{equation*}
    \lambda_{i} = \mu_{i}
  \end{equation*}
  for $i \neq 1,2n$.
\end{corollary}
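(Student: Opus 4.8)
The plan is to apply Theorem~\ref{symmetry} with $S_1 = S$ and $S_2 := (G-H)\setminus S$, so that $|S_2| = n-k$; since $0 < k < n$ this set is nonempty, and by Proposition~\ref{coset} the pair-graph $\mathcal{G}(G,H,S_2)$ is $(n-k)$-regular, so it serves as the $S'$ of the statement. Write $A$, $B$, $C$ for the adjacency operators of $\mathcal{G}(G,H,S)$, $\mathcal{G}(G,H,S_2)$ and $\mathcal{G}(G,H,G-H)$; since $S$ and $S_2$ partition $G-H$, the definitions of the adjacency operators give $C = A+B$. The key observation is that $\mathcal{G}(G,H,G-H)$ is the complete bipartite graph with parts $H$ and $G-H$, so its adjacency operator $C$ annihilates the subspace $W$ of all $f \in \mathcal{L}(G)$ with $\sum_{h\in H} f(h) = \sum_{x\in G-H} f(x) = 0$; being the intersection of two independent hyperplanes, $W$ has dimension $2n-2$, and its orthogonal complement is $W^{\perp} = \operatorname{span}(\delta_H,\delta_{G-H})$.

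First I would check that $W$ is invariant under $A$ (hence, since $C$ kills it, under $B = C-A$, with $B|_W = -A|_W$). This is the same one-line manipulation used in the proof of Theorem~\ref{symmetry}(ii): interchanging the order of summation and using that for each $s \in S$ the maps $h \mapsto hs$ and $x \mapsto xs^{-1}$ are bijections $H \to G-H$ and $G-H \to H$, one gets $\sum_{h\in H}(Af)(h) = |S|\sum_{x\in G-H} f(x)$ and $\sum_{x\in G-H}(Af)(x) = |S|\sum_{h\in H} f(h)$, both zero when $f \in W$. I would also note that $W^{\perp}$ is $A$-invariant with $A|_{W^{\perp}}$ having eigenvalues $k$ and $-k$ (eigenvectors $\delta_H + \delta_{G-H}$ and $\delta_H - \delta_{G-H}$), and likewise $B|_{W^{\perp}}$ has eigenvalues $n-k$ and $-(n-k)$. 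Consequently $\operatorname{Spec}(B|_W) = -\operatorname{Spec}(A|_W)$ as multisets. Since $\mathcal{G}(G,H,S)$ is regular and bipartite its full spectrum is symmetric about $0$, and that spectrum is the union of $\operatorname{Spec}(A|_W)$ with the symmetric pair $\{k,-k\}$; hence $\operatorname{Spec}(A|_W)$ is itself symmetric about $0$, so in fact $\operatorname{Spec}(B|_W) = \operatorname{Spec}(A|_W)$.

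It then remains to match the two decreasing enumerations. For $\mathcal{G}(G,H,S)$ the largest eigenvalue is the trivial one $k = |S|$ and, by bipartiteness, the smallest is $-k$; since every element of $\operatorname{Spec}(A|_W)$ lies in $[-k,k]$, sorting $\operatorname{Spec}(A) = \operatorname{Spec}(A|_W)\uplus\{k,-k\}$ in decreasing order places $k$ in position $1$, places $-k$ in position $2n$, and fills positions $2,\dots,2n-1$ with $\operatorname{Spec}(A|_W)$ in decreasing order; that is, $(\lambda_2,\dots,\lambda_{2n-1})$ is precisely $\operatorname{Spec}(A|_W)$ sorted. The identical argument applied to $\mathcal{G}(G,H,S_2)$ (top eigenvalue $n-k$, bottom $-(n-k)$) shows $(\mu_2,\dots,\mu_{2n-1})$ is $\operatorname{Spec}(B|_W)$ sorted, and since the two restricted spectra agree we conclude $\lambda_i = \mu_i$ for all $i \neq 1, 2n$.

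The one point that requires care is the multiplicity bookkeeping for the eigenvalue $\pm k$ when $\mathcal{G}(G,H,S)$ is disconnected: then $k$ occurs with multiplicity $c$ equal to the number of connected components (Theorem~\ref{trivialeigen} and the surrounding discussion), exactly one unit of which lives in $W^{\perp}$ while the remaining $c-1$ units live inside $W$, spanned by the differences $\delta_\Gamma - \delta_\Omega$ of component indicators. Carrying out the whole argument through the $A$- and $B$-invariant orthogonal splitting $\mathcal{L}(G) = W \oplus W^{\perp}$, rather than through individual eigenfunctions, is what makes this bookkeeping painless; here one also uses that $k > 0$ forces $S$ to contain an element of the unique nontrivial coset $G-H$, so by Proposition~\ref{isolated} the graph has no isolated vertices and every component genuinely carries the eigenvalue $k$.
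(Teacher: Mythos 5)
Your proposal is correct, and it reaches the conclusion by a genuinely different organization of the same core computation. The paper obtains the corollary by quoting Theorem~\ref{symmetry} directly: part (ii) transfers each eigenfunction with eigenvalue $\mu\neq\pm k$ to the complementary graph with eigenvalue $-\mu$, while part (i) handles the extremal eigenvalues separately by building eigenfunctions $f-g$ out of pairs of connected components, and the multiplicity count for $\pm k$ (that $c$ copies in $\mathcal{G}(G,H,S)$ become $c-1$ copies in $\mathcal{G}(G,H,S')$, with the discrepancy absorbed by the shift of the trivial eigenvalue from $k$ to $n-k$) is carried out in the discussion preceding the corollary. You instead work with the orthogonal, $A$- and $B$-invariant splitting $\mathcal{L}(G)=W\oplus W^{\perp}$, where $W$ is the space of functions with zero mean on each of $H$ and $G-H$; the identities $\sum_{h\in H}(Af)(h)=|S|\sum_{x\in G-H}f(x)$ and $\sum_{x\in G-H}(Af)(x)=|S|\sum_{h\in H}f(h)$ that establish the invariance of $W$ are exactly the content of equations (\ref{eqn:eqn1}) and (\ref{eqn:eqn2}) in the paper's proof of Theorem~\ref{symmetry}(ii), so the analytic input is identical. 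What your framing buys is that the delicate point of the corollary --- matching multiplicities of $\pm k$ when the graph is disconnected --- becomes automatic: $B|_{W}=-A|_{W}$ gives equality of the restricted spectra as multisets (after invoking bipartite symmetry), and the sorted sequences then agree in positions $2,\dots,2n-1$ without any case analysis on whether an eigenvalue equals $\pm k$. The paper's route is more constructive (it exhibits the transferred eigenfunctions explicitly), but your subspace argument is cleaner on the bookkeeping, which is precisely where the paper's own justification is most terse.
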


\begin{example}
  For $G = \mathbbm{Z}/20\mathbbm{Z}$, $H=\mathbbm{Z}/10\mathbbm{Z}$, \ and
  $S_{1} = \{ 3,5,7 \}$, $S_{2} = \{ 1$, $3$, $5$, $13$, $15$, $17$, $19 \}$ we have the
  $\mathcal{G} ( G,H,S_{1} )$ and $\mathcal{G} ( G,H,S_{2} )$
  pair-graphs shown in figure \ref{fig:corrgraphs}. Table \ref{table1} contains the values of the positive eigenvalues
  for both of the graphs. Since both graphs are bipartite the remaining
  eigenvalues correspond to the negatives of the ones shown.

  \begin{figure}[h]
    \resizebox{3cm}{3cm}{\includegraphics{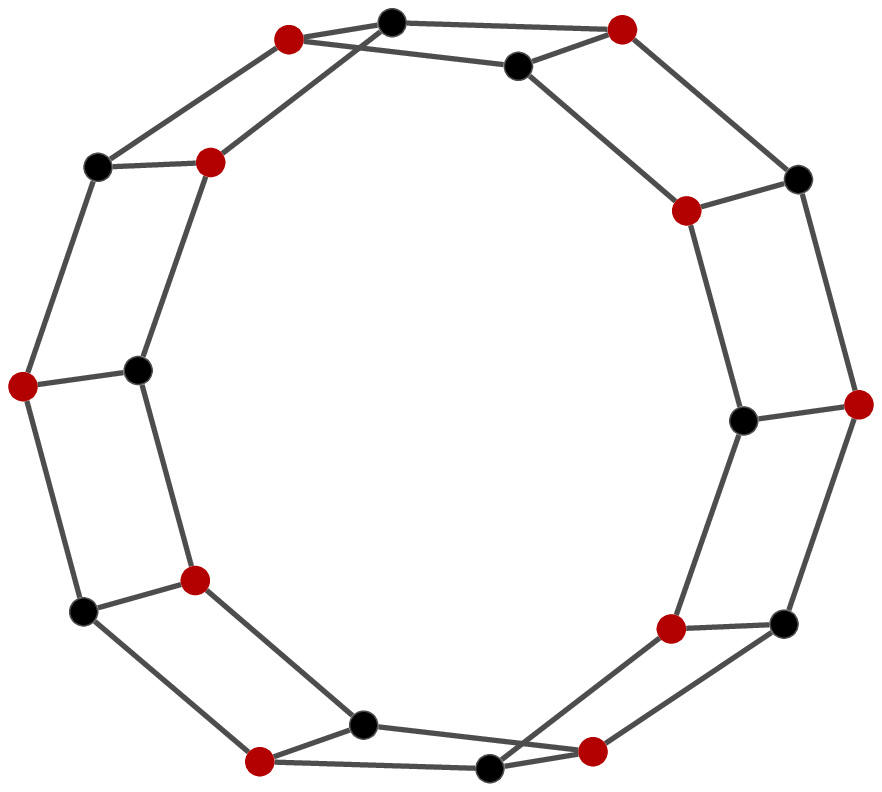}} \
    \resizebox{3cm}{3cm}{\includegraphics{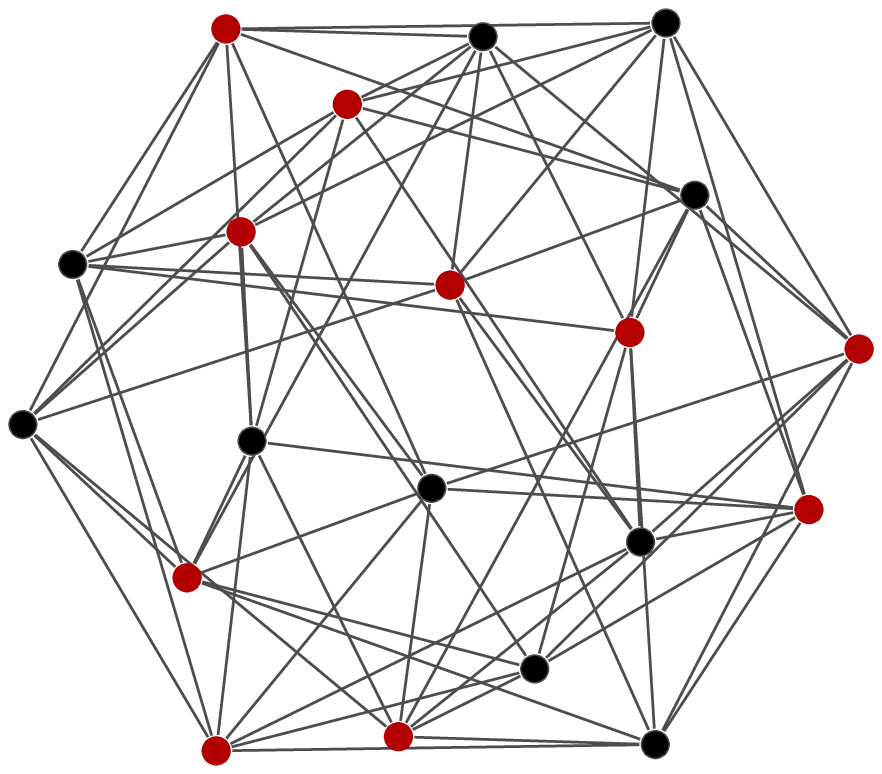}}
    \caption{Two $\mathcal{G} ( G,H,S )$ with the same nontrivial
      spectrum.}
    \label{fig:corrgraphs}
  \end{figure}

  \begin{table}[h]
    \begin{tabular}{|c|c|}
      \hline
      $\lambda_{i}$ & $\mu_{i}$\\
      \hline
      3 & 7\\
      $\frac{1}{2} \left( 3+ \sqrt{5} \right)$ & $\frac{1}{2} \left( 3+
        \sqrt{5} \right)$\\
      $\frac{1}{2} \left( 3+ \sqrt{5} \right)$ & $\frac{1}{2} \left( 3+
        \sqrt{5} \right)$\\
      $\frac{1}{2} \left( 1+ \sqrt{5} \right)$ & $\frac{1}{2} \left( 1+
        \sqrt{5} \right)$\\
      $\frac{1}{2} \left( 1+ \sqrt{5} \right)$ & $\frac{1}{2} \left( 1+
        \sqrt{5} \right)$\\
      1 & 1\\
      $\frac{1}{2} \left( -1+ \sqrt{5} \right)$ & $\frac{1}{2} \left( -1+
        \sqrt{5} \right)$\\
      $\frac{1}{2} \left( -1+ \sqrt{5} \right)$ & $\frac{1}{2} \left( -1+
        \sqrt{5} \right)$\\
      $\frac{1}{2} \left( 3- \sqrt{5} \right)$ & $\frac{1}{2} \left( 3-
        \sqrt{5} \right)$\\
      $\frac{1}{2} \left( 3- \sqrt{5} \right)$ & $\frac{1}{2} \left( 3-
        \sqrt{5} \right)$ \\
      \hline
    \end{tabular}
    \vspace{2mm}
    \caption{The positive values of the spectrum of the previous
      graphs.}
    \label{table1}
  \end{table}
  Note that $S_{1} \cup S_{2} \neq G-H= \{ 1,3,5, \ldots ,19 \}$, but $S'
  =R_{4} ( S_{1} ) = \{ 7,9,11 \}$ is such that $\mathcal{G} ( G,H,S_{1} )
  \cong \mathcal{G} ( G,H,S' )$ by Proposition \ref{rigact}, and $S' \cup
  S_{2} =G-H$.
\end{example}

{\remark{Corollary \ref{symm} defines a symmetric relation between the
    nontrivial spectrum of graphs for complementary choices of $S$ and the
    previous example shows that for a $k$-regular graph $\mathcal{G} ( G,H,S )$,
    there can be more than one $( n-k )$-regular $\mathcal{G} ( G,H,S' )$ graphs
    with the same nontrivial spectrum. In fact, if we find one, by using
    Proposition \ref{rigact} with right actions of $H$ we can obtain families of
    graphs with the same nontrivial spectrum. In order for the relation to be one
    to one, we need to consider equivalence classes of $\mathcal{G} ( G,H,S )$
    graphs under graph isomorphism. An unsettled question is whether the number of
    equivalence classes of $k$-regular $\mathcal{G} ( G,H,S )$ is the same as that
    of $( n-k )$-regular ones for fixed group $G$ and subgroup $H$ of index 2.}}

Recall that a Ramanujan graph is a connected $k$-regular graph such that for
every eigenvalue $\mu$ different from $\pm k$, one has
\[ | \mu | \leqslant 2 \sqrt{k-1} . \]
\begin{corollary}
  \label{eigenbound}With the same assumptions as before. If the pair
  graph $\mathcal{G} ( G,H,S )$ is connected and
  \[ | S | \geqslant n+2 -2 \sqrt{n},  \]
  then it is a bipartite Ramanujan graph.
\end{corollary}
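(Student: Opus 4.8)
The plan is to combine Corollary \ref{symm} with the complementary construction to reduce the problem to a case where the Ramanujan bound is easy to verify, then to handle the general case directly. Let $\mathcal{G}(G,H,S)$ be connected and $k$-regular with $k = |S| \geq n+2-2\sqrt{n}$. First I would observe that $k \geq n+2-2\sqrt{n}$ is equivalent to $n-k \leq 2\sqrt{n}-2$, and note that $2\sqrt{n}-2 \leq 2\sqrt{(n-k)+... }$ — more precisely, one checks the elementary inequality $n-k \leq 2\sqrt{k-1}$ holds under the hypothesis: indeed if we set $j = n-k$, the hypothesis reads $j \leq 2\sqrt{n}-2 = 2\sqrt{(k+j)} - 2$, and one verifies $j \leq 2\sqrt{k-1}$ by squaring and using $j \leq 2\sqrt{n}-2$ together with $k \geq 1$. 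So the complementary degree $n-k$ is already below the Ramanujan threshold $2\sqrt{k-1}$ for the original graph.

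Next I would invoke Corollary \ref{symm}: there is an $(n-k)$-regular pair-graph $\mathcal{G}(G,H,S')$ (with $S' \cup S = G-H$ after applying Proposition \ref{rigact} if necessary, as in the example) whose spectrum $\mu_1 \geq \cdots \geq \mu_{2n}$ satisfies $\lambda_i = \mu_i$ for $i \neq 1, 2n$, where $\lambda_1 \geq \cdots \geq \lambda_{2n}$ is the spectrum of $\mathcal{G}(G,H,S)$. Thus every nontrivial eigenvalue $\lambda_i$ of $\mathcal{G}(G,H,S)$ (i.e. $i \neq 1, 2n$, so $\lambda_i \neq \pm k$ since the graph is connected and the trivial eigenvalues $\pm k$ are simple by the proposition following Theorem \ref{trivialeigen}) equals some eigenvalue $\mu_i$ of the $(n-k)$-regular graph $\mathcal{G}(G,H,S')$. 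But the spectral radius of any $(n-k)$-regular graph is at most $n-k$, so $|\lambda_i| = |\mu_i| \leq n-k$. Combining with the inequality $n-k \leq 2\sqrt{k-1}$ established in the first step gives $|\lambda_i| \leq 2\sqrt{k-1}$ for all nontrivial eigenvalues, which is exactly the Ramanujan condition.

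I should take care of the degenerate boundary: when $|S| = n$ the complement $S'$ is empty and $\mathcal{G}(G,H,S') $ is trivial, but then $\mathcal{G}(G,H,S) = \mathcal{G}(G,H,G-H)$ is the complete bipartite graph $K_{n,n}$, whose nontrivial eigenvalues are all $0$, trivially satisfying the bound; and the hypothesis $n \geq n+2-2\sqrt{n}$ forces $n \geq 1$, consistent. One also needs $0 < n-k$, i.e. $|S| < n$, for Corollary \ref{symm} to apply as stated, so this boundary case must be separated out. The main obstacle I anticipate is purely the elementary inequality bookkeeping in the first step — verifying $n - k \leq 2\sqrt{k-1}$ from $k \geq n+2-2\sqrt{n}$ — which requires being slightly careful about which quantity is being square-rooted; substituting $m = \sqrt{n}$ and treating things as a quadratic in $m$ makes it transparent. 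Everything else is a direct application of the already-established symmetry of the spectrum and the trivial spectral-radius bound for regular graphs.
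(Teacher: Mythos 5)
Your proposal is correct and follows essentially the same route as the paper: both arguments use the spectral symmetry of Theorem \ref{symmetry} (via Corollary \ref{symm}) to bound every nontrivial eigenvalue by the complementary degree $n-k$, and then reduce the Ramanujan condition to the elementary inequality $n-k\leqslant 2\sqrt{k-1}$, which is equivalent to $k\geqslant n+2-2\sqrt{n}$ since $k-1\geqslant(\sqrt{n}-1)^{2}$. Your treatment of the boundary case $|S|=n$ and of the simplicity of $\pm k$ is in fact more careful than the paper's own (quite terse) proof.
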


\begin{proof}
  Due to Theorem \ref{symmetry} any $k$-regular pair-graph $\mathcal{G} (
  G,H,S )$ has nontrivial eigenvalues $\mu$ satisfying $| \mu | \leqslant \min
  \{ k,n-k \}$. Also, the pair-graph $\mathcal{G} ( G,H,S )$ is a Ramanujan
  graph when said trivial eigenvalues satisfy $| \mu | \leqslant 2
  \sqrt{k-1}$. Considering the two inequalities, it follows that all
  $k$-regular pair-graphs $\mathcal{G} ( G,H,S )$ with $k \leqslant 2$ or $k
  \geqslant n+2-2 \sqrt{n}$ are Ramanujan graphs.
\end{proof}

{\remark{\label{remark5}Note that the condition is not necessary. Also, since
    the bound $n+2-2 \sqrt{n}$ grows with the size of $n$ it is not immediately
    useful for finding families of Ramanujan graphs, but it may be useful for the construction and verification of Ramanujan graphs.}}

\begin{example}
  Let $G=\mathfrak{S}_{4}$, $H=A_{4}$. The set $S= \{ ( 1,2 )$, $( 1,3 )$, $(
  2,4 )$, $( 3,4 )$, $( 1,2,3,4 )$, $( 1,3,2,4 )$, $( 1,4,2,3 )$, $( 1,4,3,2 ) \}$
  is such that $| S | =8$ satisfy the bound of the corollary, so the
  corresponding $\mathcal{G} ( G,H,S )$ graph is Ramanujan. Its spectrum consist of $\pm 8$ with
  multiplicity 1, $\pm 4$ with multiplicity 2 and 0 with multiplicity
  18. Note that it contains the eigenvalues $\pm 4$ and as expected by
  Theorem \ref{symmetry}, the corresponding $4$-regular graph,
  generated by $S' = \{ ( 1,2 )$, $(3,4 )$, $ (1,3,2,4 )$, $( 1,4,2,3
  )\}$ has 3 connected components. Both of the pair-graphs are shown in
  figure \ref{fig:ramgcor}.

  \begin{figure}
    \resizebox{4cm}{4cm}{\includegraphics{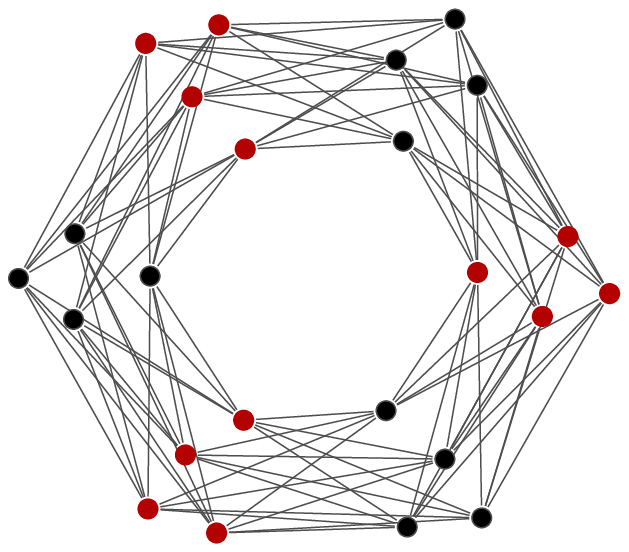}} \
    \resizebox{4cm}{4cm}{\includegraphics{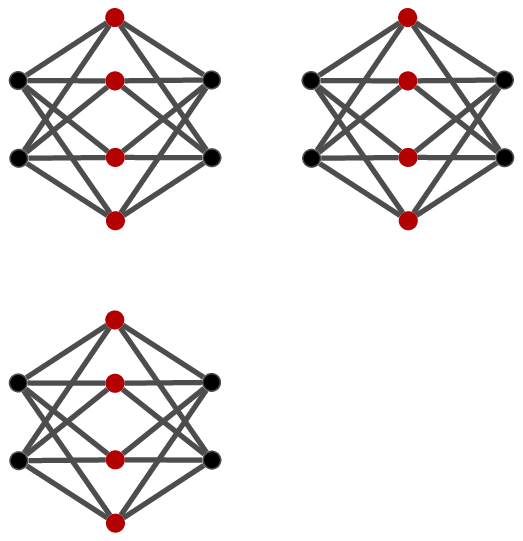}}
    \caption{A Ramanujan pair-graph and its corresponding pair-graph.}
    \label{fig:ramgcor}
  \end{figure}
\end{example}

\begin{example}
  Let $G= \tmop{GL}_{2} ( \mathbbm{F}_{3} )$ and $H= \tmop{SL}_{2} (
  \mathbbm{F}_{3} )$. Then $| G | =48$ and $| H | =24$, in this case by
  Corollary \ref{eigenbound}, for any subset $S$ with $| S | \geqslant 17$,
  the resulting pair-graph $\mathcal{G} ( G,H,S )$ graph is Ramanujan. For a
  random choice of $S$ we obtain the $17$-regular Ramanujan pair
  graph $\mathcal{G} ( G,H,S )$ shown in figure \ref{fig:ramgff}.

  \begin{figure}[h]
    \resizebox{5cm}{5cm}{\includegraphics{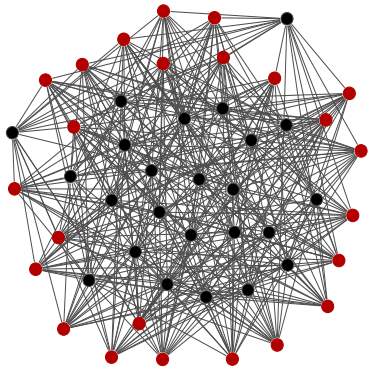}}
    \caption{ The Ramanujan pair-graph $\mathcal{G} ( G,H,S )$.}
    \label{fig:ramgff}
  \end{figure}

  In this case, the pair-graph $\mathcal{G} ( G,H, ( G-H ) -S )$ of
  figure \ref{ramgraph2}, generated by the complement of $S$ in $G-H$,
  is a $7$-regular Ramanujan graph. This shows that the result of
  Corollary \ref{eigenbound} is not a necessary condition, as
  mentioned in the above remark.

  \begin{figure}[h]
    \resizebox{4cm}{4cm}{\includegraphics{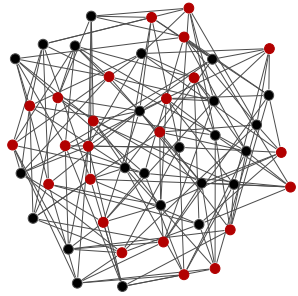}}
    \caption{The Ramanujan pair-graph $\mathcal{G} ( G,H, ( G-H ) -S
      )$.}
    \label{ramgraph2}
  \end{figure}
\end{example}

As mentioned in the Introduction, Ramanujan graphs are graphs with good
connectivity (as measured by the isoperimetric constant of the graph). Since
the addition of edges to a graph results in a graph with greater or equal
connectivity, it is expected for graphs with large amount of vertices to have
good connectivity. This can be confirmed in the above examples, as the
Ramanujan pair-graphs $\mathcal{G} ( G,H,S )$ resulting from using the bound
provided by Corollary $\ref{eigenbound}$, for group $G$ and subgroup $H$ of
order $n \geqslant 4$, have generating set $S$ satisfying $| S | \geqslant
n/2$ . Then, Corollary $\ref{eigenbound}$ can also be stated as follows: The
set of graphs
\[ \{ \mathcal{G} ( G,H,S )  | S \subset G-H, | S | \geqslant m \} \]
consists only of Ramanujan graphs when $m=n+2-2 \sqrt{n}$, where $H$ is a
subgroup of order $n$ and $[ G:H ] =2$.

\section{Summary}

\begin{figure}[h]
  \label{diagramsummary}
  \includegraphics{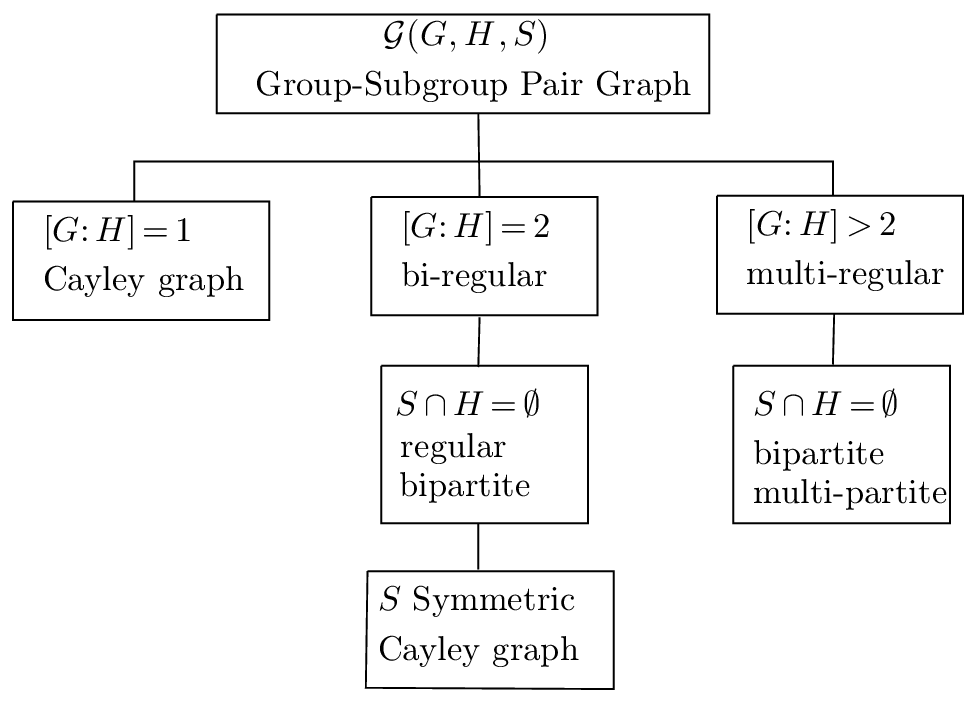}
  \caption{Characteristics of group-subgroup pair graphs.}
\end{figure}

The general properties of group-subgroup pair graphs
according to the choice of $G,H$ and $S= \emptyset$ are shown in diagram {\ref{diagramsummary}}.
 For arbitrary pair-graph $\mathcal{G} ( G,H,S )$ a set of trivial eigenvalues can be determined
 including \ the largest eigenvalue of the graph. According to the index
 of the subgroup the pair-graph can be regular (index 1 or 2) or
never be regular. If the generating set is contained in complement of the
subgroup, the pair-graph is bipartite or multipartite. The pair-graphs are
Cayley graphs in two cases: when the subgroup and the group are equal and when
the subgroup is of order 2 and the graphs is regular bipartite with symmetric
generating set. The symmetry on the generating set also makes easier to verify
if the given pair-graph is bipartite. These observations suggest that further
research on the matter may be started with graphs with symmetric subsets
before considering the general case.

\section{Acknowledgments}

The author was supported by the Japanese Government (MONBUKAGAKUSHO: MEXT)
Scholarship.

The author would like to express his gratitude to Professor Masato Wakayama
for providing the motivation for the present work and for the invaluable
discussions and support. The author would also like to thank Professors
Yoshinori Yamasaki, \ Miki Hirano and Kazufumi Kimoto \ for their valuable
comments and discussion, regarding the definition of the Group-Subgroup pair
graph during the event ``Zeta Functions in Okinawa 2013''; and Mr. Hiroto
Inoue for his valuable suggestions on the statements and proofs of some of the
results.

The computations and diagrams were elaborated using Mathematica 9.0 Student
Edition. The source files for the diagrams can be downloaded from\\
\center{\href{http://www2.math.kyushu-u.ac.jp/~ma213054/files/figures.nb}{http://www2.math.kyushu-u.ac.jp/\~{}ma213054/files/figures.nb}.}

\begin{flushleft}
  Cid Reyes-Bustos \par
  Graduate School of Mathematics \par
  Kyushu University \par
  744 Motooka, Nishi-ku, Fukuoka, 819-0395 JAPAN \par
  \tmverbatim{ma213054@math.kyushu-u.ac.jp} \par
\end{flushleft}

\end{document}